\newenvironment{enumerate*}%
{\begin{enumerate}[(I)]%
\setlength{\itemsep}{10pt}%
\setlength{\parskip}{0pt}}%
{\end{enumerate}}
\newtheorem{theorem}{Theorem}[section]
\newtheorem{proposition}[theorem]{Proposition}
\newtheorem{corollary}[theorem]{Corollary}
\newtheorem{question}[theorem]{Question}
\newtheorem{problem}[theorem]{Problem}
\newtheorem{lemma}[theorem]{Lemma}
\theoremstyle{definition}
\newtheorem{definition}[theorem]{Definition}
\DeclareMathOperator{\Span}{span}
\DeclareMathOperator{\GL}{GL}
\DeclareMathOperator{\SL}{SL}
\DeclareMathOperator{\codim}{codim}
\DeclareMathOperator{\fl}{Fl}
\DeclareMathOperator{\cost}{cost}
\newcommand{\R}{\ensuremath{\mathbb R}} 
\newcommand{\N}{\ensuremath{\mathbb N}} 
\newcommand{\F}{\ensuremath{\mathbb F}} 
\newcommand{\K}{\ensuremath{\mathbb K}} 
\newcommand{\defeq}{\ensuremath{\coloneqq}}
\newcommand{\floor}[1]{\ensuremath{\left\lfloor#1\right\rfloor}}
\newcommand{\ceil}[1]{\ensuremath{\left\lceil#1\right\rceil}}
\newcommand{\abs}[1]{\ensuremath{\lvert#1\rvert}}
\newcommand{\symmetric}[1]{\mathfrak{S}_{#1}}
\newcommand{\z}{\hspace{0.5pt}}
\NewDocumentCommand\flag{O{}mo}{%
\IfNoValueTF{#3}
{
\ensuremath{{{#2}}_{#1}^{\bullet}}
}
{\ensuremath{{#2}_{#1}^{#3}}}%
}
\DeclareRobustCommand{\shortto}{%
\mathrel{\mathpalette\short@to\relax}%
}
\newcommand{\short@to}[2]{%
\mkern2mu
\clipbox{{.5\width} 0 0 0}{$\m@th#1\vphantom{+}{\shortrightarrow}$}%
}
\newcommand{\perm}[3][]{\ensuremath{{\sigma}_{\flag{#2}, \flag{#3}}^{#1}}}
\title{Simultaneous generating sets for flags}
\author{Federico Glaudo, Noah Kravitz, Chayim Lowen}
\address[]{School of Mathematics, Institute for Advanced Study, 1 Einstein Dr., Princeton, NJ 08540, USA}
\email{federico.glaudo@ias.edu}
\address[]{Department of Mathematics, Princeton University, Princeton, NJ 08540, USA}
\email{nkravitz@princeton.edu}
\address[]{Department of Mathematics, Princeton University, Princeton, NJ 08540, USA}
\email{chayiml@princeton.edu}
\begin{document}

\begin{abstract}
We prove that any triple of complete flags in $\R^d$ admits a common generating set of size $\floor{5d/3}$ and that this bound is sharp.
This result extends the classical linear-algebraic fact---a consequence of the Bruhat decomposition of $\GL_d(\R)$---that any pair of complete flags in $\R^d$ admits a common generating set of size $d$.  We also deduce an analogue for $m$-tuples of flags with $m>3$.
\end{abstract}

\maketitle

\section{Introduction}
A \emph{complete flag} in $\R^d$ is a sequence $\flag{U}$ of nested subspaces $$\{0\}=\flag{U}[0] \subset \flag{U}[1] \subset \cdots \subset \flag{U}[d]=\mathbb R^d,$$ where $\dim \flag{U}[i]=i$ for each $i$.  We say that a set $S$ of vectors \emph{generates} $\flag{U}$ if each $\flag{U}[i]$ is the span of some subset of $S$.  Equivalently, $S$ generates $\flag U$ if $S$ contains a vector in $\flag{U}[i] \setminus \flag{U}[i-1]$ for each $i \in [d]$.  A remarkable linear-algebraic fact, essentially equivalent to the Bruhat cell decomposition of $\GL_d(\mathbb{R})$, states that any two complete flags in $\mathbb{R}^d$ can be simultaneously generated by a single set of size $d$.
The aim of this paper is to study the analogous problem for three or more flags.

For flags $\flag[1]{U}, \ldots, \flag[m]{U}$ in $\mathbb{R}^d$, let
$\mu(\flag[1]{U}, \ldots, \flag[m]{U})$
denote the smallest possible size of a set that simultaneously generates 
each of
$\flag[1]{U}, \ldots, \flag[m]{U}$.  Let $\mu(m,d)$ denote the maximum value of $\mu(\flag[1]{U}, \ldots, \flag[m]{U})$ over all 
complete
flags $\flag[1]{U}, \ldots, \flag[m]{U}$ in $\mathbb{R}^d$.  In this language, the linear-algebraic fact from the previous paragraph says that $\mu(2,d)=d$ for all $d \in \N$.  Our main result is an exact determination of $\mu(m,d)$ for all $m,d$.

\begin{theorem}\label{thm:main}
For $m,d \in \N$, we have
\[
\mu(m, d) =
\begin{cases}
d &\quad \text{if $m = 1$ or $d = 1$};\\
\frac{md}2 &\quad \text{if $m \geq 2$ is even, $d \geq 2$};\\
\frac{md}2 + \left\lfloor \frac{2d}3\right\rfloor -\frac d2 &\quad \text{if $m \geq 3$ is odd, $d \geq 2$}.
\end{cases}
\]
\end{theorem}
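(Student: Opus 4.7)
The plan is to establish the lower and upper bounds on $\mu(m, d)$ separately. For the lower bound, I would exhibit extremal configurations via a \emph{block-diagonal} construction: decompose $\R^d = W_1 \oplus \cdots \oplus W_k$, and for each $j$ choose $\flag[j]{U}$ to respect this decomposition in the sense that $\flag[j]{U}[\dim W_1 + \cdots + \dim W_l] = W_1 \oplus \cdots \oplus W_l$ for every $l$, with the $m$ induced sub-flags on each block $W_l$ chosen to realize $\mu(m, \dim W_l)$. A quotient argument shows that any common generating set $S$ projects onto each quotient $(W_1 \oplus \cdots \oplus W_l)/(W_1 \oplus \cdots \oplus W_{l-1}) = W_l$ as a common generating set for the induced sub-flags, yielding $|S| \geq \sum_l \mu(m, \dim W_l)$. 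I would then optimize over block sizes --- size $3$ for odd $m$ (supplemented by a residual block of size $1$, $2$, or $4$, according to $d \bmod 3$), size $2$ for even $m$ --- and verify that the optimum matches the formula; this reduces the lower bound to the small base cases $\mu(m, d_0)$ for $d_0 \in \{1, 2, 3, 4\}$, each of which can be computed directly.

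For the upper bound, I would reduce to the triple ($m = 3$) case. For even $m$, partition the flags into $m/2$ pairs; each pair admits a common generating set of size $d$ by the Bruhat decomposition cited in the introduction, so the union over pairs has size $\leq md/2$ and generates all $m$ flags. For odd $m \geq 5$, set aside three flags, pair the remaining $m - 3$, and combine with the triple case below: the total size $(m - 3)d/2 + \lfloor 5d/3 \rfloor = md/2 - d/2 + \lfloor 2d/3 \rfloor$ matches the formula.

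The remaining task --- and the main obstacle --- is the triple upper bound $\mu(3, d) \leq \lfloor 5d/3 \rfloor$. I would prove this by induction on $d$, with base cases $d \leq 3$ verified directly and an inductive step of the form $\mu(3, d) \leq \mu(3, d - 3) + 5$: peel off a $3$-dimensional subspace $W \subset \R^d$ suitably compatible with the three flags, pick $5$ vectors generating the ``$W$-portion'' of each flag (matching $\mu(3, 3) = 5$), and apply induction to the three quotient flags on $\R^d / W$. The difficulty lies in guaranteeing the existence of a workable $W$ for every configuration: for generic triples the construction is clear, but for special (non-generic) relative positions a careful case analysis, likely indexed by the joint Bruhat stratum of the three flags, is needed. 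This inductive step is where I expect the bulk of the technical effort to live.
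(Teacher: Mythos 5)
Your reductions to the triple case all match the paper: the block-diagonal lower-bound construction with the quotient argument is exactly the paper's superadditivity of $\mu$ in $d$ (\cref{prop:sum_flags} and \cref{cor:superadditivity}), and the upper-bound reductions for even $m$ (pairing) and odd $m\geq 5$ (setting aside a triple) are the paper's subadditivity-in-$m$ argument via \cref{cor:subadditivity}. The lower-bound base cases $d\in\{2,3,4\}$ via transverse flags are also the paper's.

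Where the proposal diverges --- and where a real gap appears --- is the heart of the matter, the bound $\mu(3,d)\le\lfloor 5d/3\rfloor$. The proposed induction $\mu(3,d)\le\mu(3,d-3)+5$ by ``peeling off'' a $3$-dimensional subspace $Y\subset\R^d$ does not go through. There are two obstacles. First, you would need $Y$ to intersect all three flags nondegenerately, so that each flag meets $Y$ in a complete flag of $Y$ and descends to a complete flag of $\R^d/Y$. A generic $Y$ does not achieve this for three flags simultaneously, and fixing $Y$ to be a layer of one flag (say $Y=U^3$) destroys the symmetry: for a transverse triple, the traces $V^j\cap Y$ and $W^k\cap Y$ are trivial for small $j,k$, and in general one obtains only partial flags. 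Second, and more fundamentally, even when such a $Y$ exists, a generating set for the quotient flags on $\R^d/Y$ does not lift: a vector new for the quotient layer $(U^i+Y)/Y$ only lifts to a vector of $U^i+Y$, which can be much larger than $U^i$, and nothing forces the lift to lie in $U^i$, let alone be new for it. Superadditivity works in the direction of \cref{prop:sum_flags} precisely because the flags there are \emph{built} to respect the direct-sum decomposition; an arbitrary triple need not decompose this way. The paper's discussion of equality cases at $d=6$ exhibits triples whose auxiliary graph contains a $12$-cycle, which is an obstruction to any such $3$-dimensional split.

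What the paper actually does is global and quite different. It builds an auxiliary graph $G$ on the $3d$ layers whose edges are dictated by the three pairwise Bruhat permutations $\perm{U}{V}$, $\perm{V}{W}$, $\perm{W}{U}$; $G$ decomposes into cycles of length divisible by $3$, which are sorted into three classes according to how efficiently their layers can be covered by compatible pairs or triples; and the class sizes are controlled by tracking the monovariant $\dim(U^i\cap V^j\cap W^k)$ along a carefully chosen lattice walk from $(0,0,0)$ to $(d,d,d)$ (\cref{lem:bound_cost}). That walk, not a dimension-reducing induction, is the engine of the argument, and is what circumvents the irreducibility of configurations like the $12$-cycle examples.
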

The first case of \cref{thm:main} is trivial.  The second case follows quickly from $\mu(2,d)=d$.  The third case is the main work of this paper, and the subcase $m=3$ turns out to be the heart of the matter.  The 
value ${\mu(3, d) = \lfloor 5d/3 \rfloor}$ is \emph{a priori} surprising since, as we discuss 
later,
a naive calculation based on generic triples of flags might lead one to guess
$\mu(3,d)=\lceil 3d/2 \rceil$.

We work over the field $\R$ throughout for simplicity. Nevertheless, both the statement and the proof of \cref{thm:main} continue to hold with $\mathbb{R}$ replaced by any other infinite field.

\subsection{Organization of the paper} We describe the situation for a pair flags in \cref{sec:2flags}.  We then analyze the behavior of generic tuples of flags in \cref{sec:generic}.  In \cref{sec:superadditivity} we show how to deduce \cref{thm:main} from the upper bound $\mu(3, d) \leq 5d/3$.  In \cref{sec:3-flags}---the most substantial part of the paper---we prove this upper bound.
We conclude with a few remarks and open questions in \cref{sec:concluding}.

\subsection{Notation and terminology}
We will omit the adjective ``complete'' in ``complete flag''.
We say that the subspace $\flag{U}[i]$ of a flag $\flag{U}$ constitutes the \emph{$i$-th layer} of $\flag{U}$.  A vector $s$ is \emph{new} for a layer $\flag{U}[i]$ if $s \in \flag{U}[i] \setminus \flag{U}[i-1]$.  Thus a set $S$ generates a flag $\flag{U}$ if it contains a new vector for each layer of $\flag{U}$.  A \emph{simultaneous generating set} for a tuple of flags is a set which generates each of the flags individually. We will sometimes drop the adjective ``simultaneous''.

The set of natural numbers is $\N:=\{1,2,\ldots \}$.  We write $[d]:=\{1,2,\ldots, d\}$ for $d \in \N$.
The group of permutations of $[d]$ is denoted $\symmetric{d}$.

\section{Two flags}\label{sec:2flags}

Our story begins with the $m=2$ case of \cref{thm:main}.  Beyond the identity $\mu(2,d)=d$, one can precisely describe how vectors are reused across a given pair of flags.

\begin{theorem}\label{thm:2flags}
Let $\flag{U},\z\flag{V}$ be flags in $\mathbb{R}^d$.  Then there exist a basis $s_1, \ldots, s_d$ of $\mathbb{R}^d$ and a permutation $\sigma \in \symmetric{d}$ such that each $s_i$ is new for both 
$U_i$ and $V_{\sigma(i)}$. It follows that
\[
\flag{U}[i]=\Span(\{s_1, \ldots, s_i\}) \quad \text{and} \quad \flag{V}[j]=\Span(\{s_{\sigma^{-1}(1)}, \ldots, s_{\sigma^{-1} (j)}\}) \quad \text{for all $i, j \in [d]$}.
\]
Moreover, the permutation $\sigma$ is uniquely determined by $\flag{U},\z\flag{V}$.
\end{theorem}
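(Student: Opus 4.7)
My plan is to use the dimension matrix $d_{ij} \defeq \dim(U_i \cap V_j)$ of the pair $(\flag{U}, \flag{V})$, indexed by $i,j \in \{0,1,\ldots,d\}$. The key observation is that the mixed second differences
$a_{ij} \defeq d_{ij} - d_{i-1,j} - d_{i,j-1} + d_{i-1,j-1}$
all lie in $\{0,1\}$. The upper bound is immediate from $d_{ij} - d_{i-1,j} \leq 1$, while the lower bound is a submodularity statement: applying the rank formula to the subspaces $U_{i-1} \cap V_j$ and $U_i \cap V_{j-1}$, using that their intersection equals $U_{i-1} \cap V_{j-1}$ and their sum sits inside $U_i \cap V_j$, gives $d_{i-1,j} + d_{i,j-1} \leq d_{ij} + d_{i-1,j-1}$. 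Consequently, for each $i$ the map $j \mapsto d_{ij} - d_{i-1,j}$ is a nondecreasing $\{0,1\}$-valued function starting at $0$ (when $j=0$) and ending at $1$ (when $j=d$), so it jumps exactly once; I define $\sigma(i)$ to be this jump location. The symmetric column-wise argument shows that for each $j$ there is a unique $i$ with $a_{ij}=1$, so $\sigma \in \symmetric{d}$.

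To construct the basis, note that $a_{i,\sigma(i)} = 1$ forces both $U_{i-1} \cap V_{\sigma(i)}$ and $U_i \cap V_{\sigma(i)-1}$ to be proper subspaces of $U_i \cap V_{\sigma(i)}$. Because $\R$ is infinite, $U_i \cap V_{\sigma(i)}$ cannot be covered by these two proper subspaces, so I can pick $s_i$ in the complement; by construction $s_i$ is new for both $U_i$ and $V_{\sigma(i)}$. An induction on $i$ then shows $\{s_1,\ldots,s_i\}$ is a basis of $U_i$: the fact that $s_i \notin U_{i-1} = \Span(s_1,\ldots,s_{i-1})$ grows the span by one dimension within $U_i$. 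Running the analogous induction in the order $\sigma^{-1}(1), \sigma^{-1}(2), \ldots$ yields the basis for each $V_j$, which gives the displayed identities.

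For uniqueness, suppose a basis $s_1, \ldots, s_d$ and a permutation $\sigma$ satisfy the stated property. Then $U_i \cap V_j = \Span\{s_k : k \leq i,\, \sigma(k) \leq j\}$, so $d_{ij} = |\{k \leq i : \sigma(k) \leq j\}|$. From this one checks that $\sigma(i)$ must be the unique $j$ at which the $i$-th row differences of the dimension matrix jump, recovering the definition used above. Since the dimension matrix depends only on the pair of flags, $\sigma$ is uniquely determined. The main obstacle is the submodularity bound $a_{ij} \geq 0$, which is precisely what makes the permutation well-defined; once that is in hand, the rest is inductive bookkeeping.
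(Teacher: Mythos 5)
Your proof is correct and tracks the paper's almost step for step: you define $\sigma(i)$ in the same way (the smallest $j$ at which $\dim(\flag{U}[i]\cap \flag{V}[j])$ exceeds $\dim(\flag{U}[i-1]\cap \flag{V}[j])$), construct $s_i$ in the same intersection, and prove uniqueness by recovering $\sigma$ from the dimension data. The one place you genuinely diverge is in showing $\sigma$ is a bijection: you isolate the submodularity inequality $d_{i-1,j}+d_{i,j-1}\le d_{ij}+d_{i-1,j-1}$ to conclude that the row-difference $j\mapsto d_{ij}-d_{i-1,j}$ is monotone and hence jumps exactly once, and then invoke the symmetric column statement; the paper instead fixes $k$ and observes that $\dim(\flag{U}[i]\cap \flag{V}[k])$ grows by at most $1$ per step from $0$ to $k$ as $i$ runs from $0$ to $d$, so $\lvert\sigma^{-1}([k])\rvert=k$ for every $k$. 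Both are valid; the counting argument is a bit shorter and sidesteps submodularity entirely, while your phrasing makes the discrete-concavity structure explicit. Two cosmetic remarks: you don't need $\R$ infinite here, since no vector space over any field is the union of two proper subspaces; and in fact the covering lemma can be avoided altogether (as the paper does) by noting that minimality of $\sigma(i)$ already gives $(\flag{U}[i]\setminus \flag{U}[i-1])\cap \flag{V}[\sigma(i)-1]=\varnothing$, so any $s_i\in(\flag{U}[i]\setminus \flag{U}[i-1])\cap \flag{V}[\sigma(i)]$ is automatically outside $\flag{V}[\sigma(i)-1]$.
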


This theorem is a combinatorial restatement of the celebrated \emph{Bruhat cell decomposition} of $\GL_d(\mathbb{R})$, which in turn 
boils down to Gaussian elimination.
We will be content to sketch the link here; we refer the reader to~\cite[Chapter IV.2]{bourbaki}
for additional context.
Let $B \subset \GL_d(\mathbb{R})$ be the Borel subgroup consisting of the upper-triangular matrices.
Identify each $\sigma \in \symmetric{d}$ with the corresponding $d \times d$ permutation matrix.  
The Bruhat decomposition states that $\GL_d(\mathbb{R})$ is the disjoint union, over $\sigma \in \symmetric{d}$, of the double cosets $B \sigma B$.

The space of all flags in $\R^d$, denoted $\fl_d(\R)$, is naturally identified with the quotient $\GL_d(\R)/B$ (by letting the $i$-th column of the matrix be some new vector for the
$i$-th layer of the corresponding flag).
The Bruhat decomposition of $\GL_d(\R)$ naturally induces a decomposition of $\fl_d(\R)$ indexed by $\sigma \in \symmetric{d}$.
To see the connection with \cref{thm:2flags}, first note that, by a change of basis, it suffices to consider the case where $\flag{U}$ is the standard flag in $\R^d$ (in which the $i$-th standard basis vector is new for the $i$-th layer).
Under the correspondence $\fl_d(\R) \simeq \GL_d(\R)/B$, the flag $\flag V$ is given by some 
coset $gB$ (with $g \in \GL_d(\R)$).
The Bruhat decomposition provides $b \in B$
and (unique) $\sigma \in \symmetric{d}$
such that
$gB = b\sigma B$.
Then the conclusion of \cref{thm:2flags} holds with $s_i$ being the $i$-th column of $b$.

For the sake of completeness---and in order to introduce a framework which will be useful later---we present a self-contained combinatorial proof of \cref{thm:2flags}.
The following argument is based on the one given by user Eric Wofsey on Math Stack Exchange~\cite{Wofsey}.

\begin{proof}[Proof of \cref{thm:2flags}]
For each $i \in [d]$, let $\sigma(i)$ be the smallest index $j$ for which
\[
(\flag{U}[i] \setminus \flag{U}[i - 1]) \cap \flag{V}[j] \neq \varnothing.
\]
We claim that $\sigma$ is a permutation of $[d]$. For this, it suffices to show that ${\lvert \sigma^{-1}([k])\rvert =k}$ for each $k \in [d]$.  Observe that $\sigma(i) \leq k$ if and only if
\[
(\flag{U}[i] \setminus \flag{U}[i-1]) \cap \flag{V}[k] \neq \varnothing,
\]
or, equivalently, if
\[
\dim(\flag{U}[i] \cap \flag{V}[k])
>\dim(\flag{U}[i - 1] \cap \flag{V}[k]).
\]
As $i$ increases from $0$ to $d$, the quantity $\dim(\flag{U}[i] \cap \flag{V}[k])$ grows from $0$ to $k$, increasing by at most 1 at each step.
So there are precisely $k$ indices $i$ with $\dim(\flag{U}[i] \cap \flag{V}[k])>\dim(\flag{U}[i-1] \cap \flag{V}[k])$. This proves the claim.

The definition of $\sigma$ ensures that for each $i \in [d]$, we have
\[
(\flag{U}[i] \setminus \flag{U}[i-1]) \cap (\flag{V}[\sigma(i)] \setminus \flag{V}[\sigma(i)-1]) \neq \varnothing.
\]
For each $i$, we choose a vector $s_i$ in this intersection. This establishes the existence part of the theorem.

For uniqueness of the permutation, suppose that $\tilde\sigma \in \symmetric{d}$ also satisfies the conclusion of the theorem. Then for each $i \in [d]$ we have
\[
(\flag{U}[i] \setminus \flag{U}[i - 1]) \cap (\flag{V}[\tilde\sigma(i)] \setminus \flag{V}[\tilde\sigma(i) - 1]) \not=\varnothing,
\]
and the definition of $\sigma$ implies that $\tilde\sigma(i)\ge \sigma(i)$. Since $\sigma,\tilde \sigma$ are both permutations, they must be equal.
\end{proof}
Several aspects of \cref{thm:2flags} will play prominent roles later in the paper.
The primary takeaway is the centrality of the permutation $\sigma \in \symmetric{d}$ appearing in the conclusion of the theorem.
We will write $\perm{U}{V}$ for the permutation thereby associated to the (ordered) pair of flags $\flag{U}$, $\flag{V}$.  Note that $\perm{V}{U}={\perm[-1]{U}{V}}$.

We take this opportunity to highlight a qualitative difference between the case of two flags and the case of more than two flags.  
In the former case, \cref{thm:2flags} assures us that there is a unique optimal way (encoded by $\sigma_{\flag{U},\flag{V}}$) to reuse new vectors across the flags.
This rigidity does not extend to the latter case, where in general there are many different optimal ways to reuse new vectors.
This additional flexibility gives the setting of three or more flags a ``combinatorial'' flavor, which contrasts with the more ``algebraic'' flavor of
\cref{thm:2flags}.  

\section{Generic flags}\label{sec:generic}

A natural starting point for our investigation into the main problem is the setting where the flags are chosen ``generically'', in the sense of the following definition.
\begin{definition}
We say that subspaces $E_1, \dots, E_m$
of $\R^d$ are \emph{transverse}
if they satisfy 
\[
\codim \left(\bigcap_{i \in S} E_i\right) = \min\left(d,\; \sum_{i \in S} \codim E_i \right) \quad \text{for all $S \subseteq [m]$}.
\]
We say that flags $\flag[1]{U}, \dots, \flag[m]{U}$ are \emph{transverse} if for each $m$-tuple of indices 
${(i_1, \ldots, i_m) \in [d]^m}$, the subspaces
$\flag[1]{U}[i_1], \dots, \flag[m]{U}[i_m]$
are transverse.
\end{definition}
Generically-chosen flags are transverse, in the sense that the set of $m$-tuples of transverse flags in $\mathbb{R}^d$ forms a dense open subset of $\fl_d(\R)^m$, the $m$-fold product of flag varieties.

A tuple of flags is transverse when intersections involving their layers are as small as possible.  One might naively expect that transverse $m$-tuples of flags maximize $\mu$ because smaller intersections provide fewer ``opportunities'' for reusing vectors across flags.  This intuition gives the right  answer for even $m$ and the wrong answer for odd~$m$.

\begin{proposition}\label{prop:generic}
Let $m,d \geq 2$.  If $\flag[1]{U}, \ldots, \flag[m]{U}$ are transverse flags in $\mathbb{R}^d$, then $$\mu(\flag[1]{U}, \ldots, \flag[m]{U})=\lceil md/2 \rceil.$$
\end{proposition}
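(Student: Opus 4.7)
The plan is to prove matching lower and upper bounds on $\mu(\flag[1]{U}, \ldots, \flag[m]{U})$; transversality reduces the whole argument to a combinatorial problem about layer indices.

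\emph{Lower bound.} Given a simultaneous generating set $S$, assign to each $s \in S$ and $j \in [m]$ the unique index $\ell^{(j)}(s) \in [d]$ for which $s$ is new in $\flag[j]{U}$. Transversality forces
\[
\dim \bigcap_{j=1}^{m} \flag[j]{U}[\ell^{(j)}(s)] = \max\Bigl(0,\, \sum_{j} \ell^{(j)}(s) - (m-1)d\Bigr),
\]
so the presence of $s \neq 0$ in this intersection gives $\sum_{j} \ell^{(j)}(s) \geq (m-1)d + 1$. Meanwhile, because $S$ generates $\flag[j]{U}$, the values $\ell^{(j)}(s)$ cover $[d]$ as $s$ ranges over $S$, which forces $\sum_{s \in S} \ell^{(j)}(s) \leq \abs{S}d - d(d-1)/2$ (this is the maximum over such multisets, attained by using $d$ as filler). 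Combining the two inequalities yields
\[
\abs{S}\bigl((m-1)d + 1\bigr) \leq m\abs{S}d - \tfrac{md(d-1)}{2},
\]
which rearranges to $\abs{S}(d-1) \geq md(d-1)/2$, and hence $\abs{S} \geq \lceil md/2 \rceil$ since $d \geq 2$.

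\emph{Upper bound.} I construct a simultaneous generating set of size $\lceil md/2 \rceil$. If $m$ is even, partition the flags into $m/2$ pairs; \cref{thm:2flags} yields a generating basis of size $d$ for each pair, and the union, of size $md/2$, generates all flags. If $m$ is odd (hence $m \geq 3$), partition into $(m-3)/2$ Bruhat pairs (contributing $(m-3)d/2$ vectors) plus one triple of three flags (handled separately with $\lceil 3d/2 \rceil$ vectors, for a total of $\lceil md/2 \rceil$). Vectors constructed for one piece of the partition are new for the appropriate layers of the flags in that piece, and transversality guarantees that they carry layer index $d$ in flags from other pieces, so the pieces do not interfere. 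For the triple, I specify $\lceil 3d/2 \rceil$ layer profiles $(a, b, c) \in [d]^3$ satisfying $a + b + c \geq 2d + 1$ and with each coordinate projection surjective onto $[d]$; by transversality each such profile determines a $1$-dimensional intersection $\flag[1]{U}[a] \cap \flag[2]{U}[b] \cap \flag[3]{U}[c]$, and any nonzero representative is automatically new for the prescribed layer in each of the three flags. When $d$ is even, the $3d/2$ cyclic shifts of $(i, d+1-i, d)$ for $i = 1, \ldots, d/2$ provide such profiles; when $d$ is odd, the analogous $3(d-1)/2$ cyclic shifts for $i = 1, \ldots, (d-1)/2$ miss only the middle value $(d+1)/2$ in each coordinate, and one adjoins two further profiles such as $((d+1)/2, (d+1)/2, d)$ and $(d, d, (d+1)/2)$ to fill the gap.

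The main obstacle is the combinatorial construction for three transverse flags: the sum threshold $a + b + c \geq 2d + 1$ and the coordinate-coverage requirement are tight at $\lceil 3d/2 \rceil$, which forces a rigid structure, and the odd-$d$ case requires an ad hoc augmentation. Once the profiles are chosen, verifying that the selected vectors span every layer of every flag reduces to a short linear-independence argument, using that within $\flag[j]{U}[i]$ the chosen vectors' $\ell^{(j)}$-values form a chain $1, 2, \ldots, i$.
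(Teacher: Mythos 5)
Your lower-bound argument is correct and takes a genuinely different route from the paper. Where the paper isolates the ``middle'' layers (noting that layers of dimension at most $\lfloor d/2\rfloor$ intersect pairwise trivially, and that a vector can be new for at most two layers of dimension $\lceil d/2\rceil$), you instead double-count the total layer index $\sum_{s\in S}\sum_j\ell^{(j)}(s)$: transversality bounds it from below via $\sum_j\ell^{(j)}(s)\ge (m-1)d+1$ for each $s$, and the surjectivity of $s\mapsto\ell^{(j)}(s)$ onto $[d]$ bounds it from above by $\abs{S}d-d(d-1)/2$ per flag. Dividing out $d-1$ (using $d\ge 2$) gives $\abs{S}\ge md/2$. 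This averaging argument is clean and self-contained and arguably more elegant than the paper's; both are correct.

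Your upper bound follows the same decomposition as the paper (pairs of flags handled by \cref{thm:2flags}, one residual triple when $m$ is odd). The side claim that vectors from one pair ``carry layer index $d$'' in the other flags is true for transverse flags but is unnecessary: taking unions of generating sets never requires non-interference. The one genuine flaw is in the odd-$d$ augmentation for the triple. You assert that each profile determines a $1$-dimensional intersection and that any nonzero representative is ``automatically new'' for all three prescribed layers; as you correctly observe, both facts hold precisely when $a+b+c=2d+1$. But your example profile $(d,d,(d+1)/2)$ has $a+b+c=(5d+1)/2>2d+1$ for $d\ge 2$: the intersection $\flag[1]{U}[d]\cap\flag[2]{U}[d]\cap\flag[3]{U}[(d+1)/2]$ is all of $\flag[3]{U}[(d+1)/2]$, $(d+1)/2$-dimensional, and a generic vector in it fails to be new for $\flag[3]{U}[(d+1)/2]$. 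The fix is immediate---replace $(d,d,(d+1)/2)$ by $(d,(d+1)/2,(d+1)/2)$ (or any permutation of $((d+1)/2,(d+1)/2,d)$), which has sum exactly $2d+1$, still supplies coordinate value $(d+1)/2$ in the third slot, and satisfies your stated criteria---but as written the claim is false for the example given.
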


\begin{proof}
We begin with the lower bound on $\mu(\flag[1]{U}, \ldots, \flag[m]{U})$.  If $d$ is even, then the lower bound follows immediately from the fact, due to transversality, that the layers $\flag[i]{U}[d/2]$ (for $i \in [m]$) intersect pairwise trivially.  Now suppose that $d=2e+1$ is odd.  Transversality again forces the layers $\flag[i]{U}[e]$ to intersect pairwise trivially. So we already obtain
the bound
\[
\mu(\flag[1]{U}, \ldots, \flag[m]{U}) \geq me=m\lfloor d/2 \rfloor,
\]
by counting only the new vectors for layers of dimension at most $e$.  Transversality prevents a vector from being new for both a layer of dimension ${e+1}$ and a layer of dimension at most $e$, since two such layers from different flags intersect trivially.  Transversality moreover implies that any single vector can be new for at most two different layers of dimension ${e+1}$, since any three layers of dimension ${e+1}$ intersect trivially.
Hence we need at least $\lceil m/2 \rceil$ further new vectors for the layers of dimension ${e+1}$, for a total of
\[
me+\lceil m/2 \rceil=\lceil md/2 \rceil
\]
vectors in any simultaneous generating set.

We turn now to the upper bound. 
As a consequence of the subadditivity property of $\mu$ described in \cref{prop:splitflags}, the upper bound for all $m\ge 2$ follows from the upper bound for $m=2,3$.  The $m=2$ case follows from \cref{thm:2flags}.  

For the $m=3$ case, suppose that $\flag U, \flag V, \flag W$ are transverse flags in $\mathbb{R}^d$.  We observe that if $i+j=d+1$, then $\flag{U}[i] \cap \flag{V}[j]$ has dimension 1 and is contained in neither $\flag{U}[i-1]$ nor $\flag{V}[j-1]$. So we can find a vector
\[
s \in (\flag{U}[i] \setminus \flag{U}[i-1]) \cap (\flag{V}[j] \setminus \flag{V}[j-1]),
\]
i.e., one which is new for both $\flag{U}[i]$ and $\flag{V}[j]$.  The same holds with $(\flag{U}[i],\flag{V}[j])$ replaced by $(\flag{V}[i],\flag{W}[j])$ or $(\flag{W}[i],\flag{U}[j])$.

For each $1 \leq i \leq d/2$, consider the three pairs 
\[
(\flag{U}[i], \flag{V}[d+1-i]), \;\; (\flag{V}[i], \flag{W}[d+1-i]), \;\; (\flag{W}[i], \flag{U}[d+1-i]),
\]
and for each pair choose a common new vector $s$ as above.  If $d$ is odd, then also choose a vector in each of 
\[
(\flag{U}[\ceil{d/2}] \setminus \flag{U}[\floor{d/2}]) \cap (\flag{V}[\ceil{d/2}] \setminus \flag{V}[\floor{d/2}]), \quad \flag{W}[\ceil{d/2}] \setminus \flag{W}[\floor{d/2}].
\]
This gives a collection of $\lceil 3d/2 \rceil$ vectors generating all three flags.
\end{proof}

We remark that the idea behind this upper-bound argument can be extended to a larger family of triples of flags: If $\flag U,\z\flag V,\z\flag W$ are flags in $\mathbb{R}^n$ such that the pair $\flag U,\z\flag V$ is transverse, then ${\mu(\flag U,\flag V, \flag W) \leq \lceil 3d/2 \rceil}$. We leave the details to the interested reader. 

\section{Reducing to the hardest subcase}\label{sec:superadditivity}
As we mentioned earlier, the generic construction does not require the maximum number of generating vectors when $m$ is odd.
It turns out that the missing ingredient in our construction is superadditivity, which will allow us to leverage the ceiling function in \cref{prop:generic}.

\begin{proposition}\label{prop:sum_flags}
Let $m, d, e \in \N$.
Let $\flag[1]{U}, \dots, \flag[m]{U}$ and
$\flag[1]{V}, \dots, \flag[m]{V}$ be $m$-tuples of flags in $\R^{d}$
and $\R^{e}$, respectively.
Then there exists an $m$-tuple of flags
$\flag[1]{W}, \dots, \flag[m]{W}$
in 
$\R^{d + e}$ such that
$$\mu(\flag[1]{W}, \dots, \flag[m]{W}) = \mu(\flag[1]{U}, \dots, \flag[m]{U})+\mu(\flag[1]{V}, \dots, \flag[m]{V}).$$
\end{proposition}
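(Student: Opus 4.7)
The plan is to construct the desired flags via a direct-sum/concatenation trick. Write $\R^{d+e} = \R^d \oplus \R^e$ and, for each $i \in [m]$, define $\flag[i]{W}$ by
\[
\flag[i]{W}[k] = \flag[i]{U}[k] \oplus \{0\} \quad \text{for $0 \le k \le d$,} \qquad \flag[i]{W}[d+k] = \R^d \oplus \flag[i]{V}[k] \quad \text{for $0 \le k \le e$.}
\]
A routine dimension check shows that $\flag[i]{W}$ is a flag in $\R^{d+e}$. The intuition is that, because the $\R^e$-component of a layer $\flag[i]{W}[k]$ is forced to be trivial precisely when $k \le d$, any simultaneous generating set must split along the decomposition $\R^d \oplus \R^e$, so no new vector can serve both halves.

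For the upper bound $\mu(\flag[1]{W}, \ldots, \flag[m]{W}) \leq \mu(\flag[1]{U}, \ldots, \flag[m]{U}) + \mu(\flag[1]{V}, \ldots, \flag[m]{V})$, I would take optimal generating sets $T_U \subset \R^d$ and $T_V \subset \R^e$ for the two given tuples and embed them as $T_U \oplus \{0\}$ and $\{0\} \oplus T_V$ in $\R^{d+e}$. If $u \in T_U$ is new for $\flag[i]{U}[k]$, then $(u,0)$ is new for $\flag[i]{W}[k]$; if $v \in T_V$ is new for $\flag[i]{V}[k]$, then $(0,v)$ lies in $\R^d \oplus \flag[i]{V}[k]$ but not in $\R^d \oplus \flag[i]{V}[k-1]$, hence is new for $\flag[i]{W}[d+k]$. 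So the union generates each $\flag[i]{W}$.

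For the matching lower bound, I would take any simultaneous generating set $S$ for $\flag[1]{W}, \ldots, \flag[m]{W}$ and partition it as $S = S_U \sqcup S_V$, where $S_U$ consists of vectors with zero $\R^e$-component and $S_V$ of the remaining vectors. The key observation is that a vector new for $\flag[i]{W}[k]$ with $k \le d$ lies in $\R^d \oplus \{0\}$ and so belongs to $S_U$; its $\R^d$-component is then new for $\flag[i]{U}[k]$. Thus the projection of $S_U$ onto $\R^d$ simultaneously generates $\flag[1]{U}, \ldots, \flag[m]{U}$, giving $|S_U| \ge \mu(\flag[1]{U}, \ldots, \flag[m]{U})$. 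Dually, a vector new for $\flag[i]{W}[d+k]$ with $k \ge 1$ must have $\R^e$-component new for $\flag[i]{V}[k]$, and such a vector has nonzero $\R^e$-component and hence lies in $S_V$; so the projection of $S_V$ onto $\R^e$ generates each $\flag[i]{V}$, giving $|S_V| \ge \mu(\flag[1]{V}, \ldots, \flag[m]{V})$. Adding the two estimates gives the desired inequality.

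I do not expect a real obstacle here: the argument is a clean bookkeeping on the block-triangular structure of the $\flag[i]{W}$, and the only thing to verify carefully is that the ``new vector'' condition correctly forces each element of $S$ into the appropriate half of the direct sum decomposition.
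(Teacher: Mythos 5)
Your proposal is correct and follows essentially the same approach as the paper: the same block-diagonal construction of $\flag[i]{W}$, the same embedding for the upper bound, and the same partition of a generating set $S$ into the part lying in $\R^d\times\{0\}^e$ and the rest (projected to $\R^e$) for the lower bound. No gaps.
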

\begin{proof}
For each $i \in [m]$ and $0 \leq j \leq d + e$, define the subspace $\flag[i]{W}[j] \subseteq \mathbb{R}^{d+e}$ by
\[
\flag[i]{W}[j]:= \begin{cases}
\flag[i]{U}[j] \times \{0\}^e &\quad \text{if $j \leq d$};\\
\R^d \times \flag[i]{V}[j - d] &\quad \text{if $j \geq d$}.
\end{cases}
\]
Let $S$ be a simultaneous generating set for $\flag[1]{W}, \dots, \flag[m]{W}$ of size ${|S|=\mu(\flag[1]{W}, \dots, \flag[m]{W})}$.
Let $S_U$ be the intersection of $S$
with the subspace $\R^d \times \{0\}^e \subset \R^{d + e}$,
and let $S_V$ 
be the projection of 
$S \setminus S_U$ onto the factor $\R^e$ of $\R^d \times \R^e = \R^{d + e}$.
Identifying $\R^d \times \{0\}^d$ with $\R^d$ in the obvious way, we see that $S_U$ is a generating set for $\flag[1]{U}, \dots, \flag[m]{U}$. Likewise,
$S_V$ is a generating set for
$\flag[1]{V}, \dots, \flag[m]{V}$. Hence
\[
\mu(\flag[1]{W}, \dots, \flag[m]{W})=\abs{S} \geq \abs{S_U} + \abs{S_V} \geq \mu(\flag[1]{U}, \dots, \flag[m]{U})+\mu(\flag[1]{V}, \dots, \flag[m]{V}).
\]
To see the reverse inequality, note that if $T_U$ is a generating set for $\flag[1]{U}, \dots, \flag[m]{U}$ and $T_V$ is a generating set for 
$\flag[1]{V}, \dots, \flag[m]{V}$, then
\[
(T_U \times \{0\}^e) \cup (\{0\}^d \times T_V) \subseteq \mathbb{R}^{d+e}
\]
is a generating set for $\flag[1]{W}, \dots, \flag[m]{W}$ of size at most $|T_U|+|T_V|$.
\end{proof}
\begin{corollary}\label{cor:superadditivity}
The function $\mu$ is superadditive in its second argument, i.e., $$\mu(m, d + e) \geq \mu(m, d) + \mu(m, e)$$ for all $m, d, e \in \N$.
\end{corollary}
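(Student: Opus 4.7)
The plan is to deduce this corollary essentially immediately from \cref{prop:sum_flags}, which does all of the real work. The only conceptual step is to pass from the ``there exists a tuple of flags achieving the sum'' formulation of \cref{prop:sum_flags} to the ``maximum over all tuples'' formulation of $\mu(m,\cdot)$.

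Concretely, first I would choose $m$-tuples of flags $\flag[1]{U},\ldots,\flag[m]{U}$ in $\R^d$ and $\flag[1]{V},\ldots,\flag[m]{V}$ in $\R^e$ that realize the maxima, i.e., with
\[
\mu(\flag[1]{U},\ldots,\flag[m]{U}) = \mu(m,d) \quad\text{and}\quad \mu(\flag[1]{V},\ldots,\flag[m]{V}) = \mu(m,e).
\]
Such tuples exist by the very definition of $\mu(m,d)$ and $\mu(m,e)$ as the maxima of $\mu$ over all tuples of flags in the corresponding spaces (and one may invoke compactness of the flag variety together with upper semicontinuity of $\mu$, though for this corollary it suffices to take any tuple achieving the supremum, which is attained since $\mu$ takes values in a finite set of integers).

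Next, I would apply \cref{prop:sum_flags} to produce an $m$-tuple of flags $\flag[1]{W},\ldots,\flag[m]{W}$ in $\R^{d+e}$ such that
\[
\mu(\flag[1]{W},\ldots,\flag[m]{W}) = \mu(\flag[1]{U},\ldots,\flag[m]{U}) + \mu(\flag[1]{V},\ldots,\flag[m]{V}) = \mu(m,d) + \mu(m,e).
\]
Since $\mu(m, d+e)$ is the maximum of $\mu$ over all $m$-tuples of flags in $\R^{d+e}$, in particular
\[
\mu(m, d+e) \geq \mu(\flag[1]{W},\ldots,\flag[m]{W}) = \mu(m, d) + \mu(m, e),
\]
which is the desired inequality. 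There is no substantial obstacle: everything beyond invoking \cref{prop:sum_flags} is bookkeeping about the definition of $\mu(m,\cdot)$ as a maximum.
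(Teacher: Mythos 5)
Your proof is correct and follows exactly the intended derivation: take worst-case tuples in $\R^d$ and $\R^e$, invoke \cref{prop:sum_flags} to build a tuple in $\R^{d+e}$ realizing the sum, and note that $\mu(m,d+e)$ is a maximum so it is at least this value. (The parenthetical about compactness and semicontinuity is unnecessary, as you yourself point out; integrality and boundedness of $\mu$ already guarantee the maximum is attained.)
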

The case $m = d = e = 3$ of this corollary, together with \cref{prop:generic}, gives
\[
\mu(3, 6) \geq 2\mu(3, 3) \geq 2\floor{3 \cdot 3/2} = 10.
\]
By contrast, we saw in \cref{prop:generic} that a generic triple of flags in $\mathbb{R}^6$ can be generated by a set of only
$\ceil{3 \cdot 6/2} = 9$ vectors. This example shows that generic tuples do not in general achieve the maximum value $\mu(m,d)$ for odd $m$.

In a different direction, we next observe that $\mu$ is subadditive in its first argument, a fact that we already used near the end of the previous section.

\begin{proposition}\label{prop:splitflags}
Let $m, n, d \in \N$.
Suppose $\flag[1]{U}, \dots, \flag[m]{U}$ and
$\flag[1]{V}, \dots, \flag[n]{V}$ be tuples of flags in $\R^{d}$.
Then
\[
\mu(\flag[1]{U}, \dots, \flag[m]{U}, \flag[1]{V}, \dots, \flag[n]{V}) \leq \mu(\flag[1]{U}, \dots, \flag[m]{U})+\mu(\flag[1]{V}, \dots, \flag[n]{V})
\]
\end{proposition}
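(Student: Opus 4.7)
The plan is to observe that any generating set for a tuple of flags remains a generating set when we enlarge it by adding more vectors. Concretely, I would pick a simultaneous generating set $S_U$ for $\flag[1]{U}, \dots, \flag[m]{U}$ of size exactly $\mu(\flag[1]{U}, \dots, \flag[m]{U})$, and a simultaneous generating set $S_V$ for $\flag[1]{V}, \dots, \flag[n]{V}$ of size exactly $\mu(\flag[1]{V}, \dots, \flag[n]{V})$. Both sets exist by definition of $\mu$.

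Next I would take $T := S_U \cup S_V$ as a candidate simultaneous generating set for the combined tuple. To verify this, fix any flag in the combined list. If it is one of the $\flag[i]{U}$, then $S_U \subseteq T$ already contains a new vector for each of its layers, so $T$ generates $\flag[i]{U}$; the symmetric argument handles each $\flag[j]{V}$. Thus $T$ generates all $m+n$ flags simultaneously.

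Finally, estimating the size gives
\[
\mu(\flag[1]{U}, \dots, \flag[m]{U}, \flag[1]{V}, \dots, \flag[n]{V}) \leq |T| = |S_U \cup S_V| \leq |S_U| + |S_V|,
\]
which is exactly the right-hand side of the desired inequality. There is no real obstacle here: the statement is a formal consequence of the fact that ``being a generating set'' is a monotone property with respect to set inclusion, and the only mild care needed is to write $|S_U \cup S_V| \leq |S_U| + |S_V|$ (rather than equality) in case the two chosen generating sets happen to share vectors.
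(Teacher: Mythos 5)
Your proof is correct and follows the same approach as the paper: take the union of optimal generating sets for the two sub-tuples and observe that it generates the combined tuple. The only difference is that you spell out the (trivial) verification and note the possibility of overlap, which the paper leaves implicit.
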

\begin{proof}
To generate
$\flag[1]{U}, \dots, \flag[m]{U}, \flag[1]{V}, \dots, \flag[n]{V}$, simply combine a generating set for
$\flag[1]{U}, \dots, \flag[m]{U}$ of size
$\mu(\flag[1]{U}, \dots, \flag[m]{U})$
and a generating set for 
$\flag[1]{V}, \dots, \flag[n]{V}$
of size 
$\mu(\flag[1]{V}, \dots, \flag[n]{V})$.
\end{proof}
\begin{corollary}\label{cor:subadditivity}
The function $\mu$ is subadditive in its first argument, i.e., 
$${\mu(m + n, d)} \leq {\mu(m, d)} + {\mu(n, d)}$$ for all $m, n, d \in \N$.
\end{corollary}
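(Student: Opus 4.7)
The plan is to derive the corollary as a direct formal consequence of \cref{prop:splitflags} combined with the definition of $\mu(m,d)$ as a maximum. Concretely, I would begin by fixing an arbitrary $(m+n)$-tuple of flags $\flag[1]{U}, \dots, \flag[m+n]{U}$ in $\R^d$, and then split the tuple into the sub-tuple $\flag[1]{U}, \dots, \flag[m]{U}$ and the sub-tuple $\flag[m+1]{U}, \dots, \flag[m+n]{U}$. Applying \cref{prop:splitflags} to this splitting yields
\[
\mu(\flag[1]{U}, \dots, \flag[m+n]{U}) \leq \mu(\flag[1]{U}, \dots, \flag[m]{U}) + \mu(\flag[m+1]{U}, \dots, \flag[m+n]{U}).
\]
Since each term on the right-hand side is bounded above by the corresponding maximum---namely $\mu(m,d)$ and $\mu(n,d)$---the right-hand side is at most $\mu(m,d)+\mu(n,d)$. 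Taking the supremum of the left-hand side over all $(m+n)$-tuples of flags in $\R^d$ then produces the claimed bound $\mu(m+n,d) \leq \mu(m,d)+\mu(n,d)$.

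There is essentially no obstacle in this argument: the corollary is a routine formal passage from \cref{prop:splitflags}, mirroring the way \cref{cor:superadditivity} was deduced from \cref{prop:sum_flags}. All of the conceptual content was already packaged into \cref{prop:splitflags}, whose proof amounts to the observation that the union of two optimal generating sets for the two sub-tuples generates the concatenated tuple.
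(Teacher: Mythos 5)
Your proof is correct and is exactly the intended deduction: the paper leaves \cref{cor:subadditivity} without an explicit proof precisely because it follows from \cref{prop:splitflags} by the routine passage you describe (split an arbitrary $(m+n)$-tuple, apply the proposition, bound each term by the corresponding maximum, and take the supremum). No further comment is needed.
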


The only remaining ingredient for the proof of \cref{thm:main} is the following result, which is the most difficult part of this paper.
\begin{theorem}\label{thm:3flags}
We have $\mu(3, d) \leq 5d/3$, i.e., any triple of flags in $\R^d$ admits a simultaneous generating set of size 
at most ${5d/3}$.
\end{theorem}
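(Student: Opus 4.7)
My plan is to prove the bound by induction on $d$ with step size three, reducing the $d$-dimensional case to the $(d-3)$-dimensional case at a cost of five vectors. The base cases $d \leq 3$ can be handled directly: for $d \leq 2$ the bound $5d/3$ is comfortably larger than the generic bound $\lceil 3d/2 \rceil$ from \cref{prop:generic}, and for $d = 3$ one has $5d/3 = 5$ matching the generic bound exactly, with the few non-transverse cases verifiable by hand.

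For the inductive step with $d \geq 4$, given a triple $\flag{U},\flag{V},\flag{W}$ of flags in $\R^d$, I would seek a three-dimensional subspace $Y \subseteq \R^d$ together with a set $S_Y \subseteq Y$ of five vectors that simultaneously generates the three flags restricted to $Y$ (each of which is a flag in $Y \cong \R^3$). The quotient flags on $\R^d/Y \cong \R^{d-3}$ would then, by the inductive hypothesis, admit a generating set of size at most $5(d-3)/3$. Combining $S_Y$ with suitable lifts of this quotient set should give a generating set for the original triple of size $5d/3$.

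The key difficulty is the lifting step: a vector $\bar s \in \R^d/Y$ that is new for three layers of the quotient flags must be lifted to a vector $s \in \R^d$ simultaneously new for the corresponding original layers. This requires the compatibility
\[
\pi\bigl(\flag{U}[i] \cap \flag{V}[j] \cap \flag{W}[k]\bigr) = \pi(\flag{U}[i]) \cap \pi(\flag{V}[j]) \cap \pi(\flag{W}[k])
\]
under the projection $\pi \colon \R^d \to \R^d/Y$, which holds for ``generic'' $Y$ but may fail otherwise. Balancing this genericity requirement against the need for $S_Y$ to be adapted to three specific layers of each flag is a delicate matter.

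I expect the main obstacle to be the combinatorial task of locating such a $Y$ for an \emph{arbitrary} triple of flags. The bound $5d/3$ is already tight at $d = 6$ (where $\mu(3,6) = 10$ follows from \cref{cor:superadditivity}), so the argument cannot rely on generic behavior---every worst-case configuration must still admit a suitable three-dimensional reduction. Identifying the right structural invariant of a triple of flags, perhaps a three-way analogue of the pairwise Bruhat permutation $\perm{U}{V}$ from \cref{thm:2flags}, and using it to guarantee the existence of a good $Y$, looks to be the most delicate step and probably accounts for the constant $5/3$ itself.
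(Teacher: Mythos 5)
Your proposal is a plan, not a proof, and you say as much yourself: you correctly locate the central difficulty---constructing a three-dimensional subspace $Y$ adapted to an arbitrary triple of flags, and verifying the lifting compatibility $\pi(\flag{U}[i] \cap \flag{V}[j] \cap \flag{W}[k]) = \pi(\flag{U}[i]) \cap \pi(\flag{V}[j]) \cap \pi(\flag{W}[k])$---but you do not carry it out. That step is precisely where the substance of the theorem lies, so the proposal as written has a genuine gap.

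Beyond being unfinished, the inductive strategy of ``peel off a $3$-dimensional $Y$, recurse on $\R^d/Y$'' is in tension with a phenomenon the paper observes when discussing equality cases: already at $d=6$ there are triples of flags achieving $\mu = 10$ whose associated graph $G$ (built from the Bruhat permutations $\perm{U}{V}, \perm{V}{W}, \perm{W}{U}$) contains a single $12$-cycle rather than decomposing into two triangles. Such configurations are not direct sums of $\R^3$-blocks, so there is no obvious $Y$ along which the structure splits cleanly, and any successful inductive argument would still have to contend with this entanglement---likely by building exactly the kind of global combinatorial invariant you gesture at. The paper's actual proof avoids induction entirely. It works globally: it introduces the tripartite graph $G$ on the $3d$ layers (a disjoint union of cycles of length divisible by $3$), a companion graph $\widetilde G$ recording compatible pairs, partitions the cycles of $G$ into classes $A$ (even cycles, compatible triangles, pairs of crossing odd cycles), $B$ (non-crossing odd cycles of length $\geq 9$), and $C$ (non-crossing incompatible triangles), and then proves the key inequality $\tfrac12|A| + \tfrac13|B| \geq d$ by choosing a lattice walk from $(0,0,0)$ to $(d,d,d)$ and tracking the cost $\dim(\flag{U}[i]\cap\flag{V}[j]\cap\flag{W}[k])$ along the way. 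The constant $5/3$ then falls out of the arithmetic $\tfrac12|A| + \tfrac59|B| + \tfrac23|C| \leq \tfrac53 d$. In short: you have found the right question, but not the answer, and the answer the paper gives does not take the inductive route you propose.
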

We will prove \cref{thm:3flags} in the next section. For now, we take it for granted and deduce \cref{thm:main}.
\begin{proof}[Proof of \cref{thm:main}]
There is nothing to prove when $m = 1$ or $d = 1$.
For even $m$, we can combine the lower bound
$\mu(m, d) \geq {md/2}$
from \cref{prop:generic} with the upper bound $\mu(m, d) \leq m/2 \cdot \mu(2, d) = md/2$ from \cref{cor:subadditivity} and \cref{thm:2flags}.

It remains to consider the case where $m \geq 3$ is odd and $d \geq 2$. We provisionally set
\[
\lambda(m, d) \defeq \frac{md}2 + 
\floor{\frac{2d}3} -\frac d2.
\]
To show that $\mu = \lambda$, we will establish the inequalities $\mu \geq \lambda$ and $\mu \leq \lambda$ separately. 
For now, observe that 
\begin{align*}
\lambda(m+2, d)-\lambda(m, d) = d \quad\text{and}\quad
\lambda(m, d+3) - \lambda(m, d) = \left\lceil \frac{3m}{2}\right\rceil.
\end{align*}
We start with the lower bound on $\mu$.  The generic construction in \cref{prop:generic} gives
\begin{equation*}
\mu(m,d) \ge \left\lceil \frac{md}2 \right\rceil = \lambda(m, d) \quad \text{for $2\le d\le 4$.}
\end{equation*}
Thanks to \cref{cor:superadditivity}, we know that $\mu(m, d)$ is superadditive in $d$ and thus
$$ \mu(m, d+3) - \mu(m, d) \ge \mu(m, 3) \ge \left\lceil \frac{3m}2 \right\rceil = \lambda(m, d+3) - \lambda(m, d). $$
It follows by induction on $d$ that $\mu(m, d)\ge \lambda(m, d)$.

We now treat the upper bound on $\mu$. 
\cref{thm:3flags} tells us that
\begin{equation*}
\mu(3,d) \le \left\lfloor \frac{5d}3 \right\rfloor = \lambda(3, d) \,.
\end{equation*}
For $m \geq 5$, we have
$\mu(m - 3, d) = (m - 3)d/2$ since $m - 3$ is even.
Also, thanks to \cref{cor:subadditivity}, we know that $\mu(m, d)$ is subadditive in $m$ and thus
$$ 
\mu(m, d) \leq \mu(m - 3, d) + \mu(3, d) \leq \frac{(m - 3)d}{2} + \floor{\frac{5d}{3}} = \lambda(m, d),
$$
as desired.
\end{proof}

\section{Three flags}\label{sec:3-flags}
In this section 
we prove \cref{thm:3flags}, the core
result of this paper.
Fix $d\in \N$ and a triple of flags $\flag{U},\z\flag{V},\z\flag{W}$ in $\R^d$.  We wish to show that $\mu(\flag U, \flag V, \flag W) \leq 5d/3$.

Say that a set of layers is \emph{compatible} if there is a single vector which is new for all of them.  Every singleton set is compatible, and some sets of size $2$ or $3$ are compatible.  There can be no larger compatible sets, since a compatible set contains at most one layer from each of our three flags.  The inequality $\mu(\flag U, \flag V, \flag W) \leq 5d/3$ is equivalent to the existence of a family of at most $5d/3$ compatible sets whose union covers all of $\flag U, \flag V, \flag W$ (excluding $0$-dimensional layers).  We will see that compatible pairs can be neatly characterized, whereas compatible triples are somewhat more elusive.

Recall the permutations $\perm{U}{V},\z \perm{V}{W},\z\perm{W}{U} \in \symmetric{d}$ provided by \cref{thm:2flags}.

\subsection{A tale of two graphs}\label{subsec:graphs}
We introduce two auxiliary graphs $G$, $\widetilde G$ associated to the triple of flags $\flag U$, $\flag V$, $\flag W$.  The vertex set of each graph is the set of formal symbols
\[
\Gamma:=\{\flag{U}[1], \dots, \flag{U}[d], \flag{V}[1], \dots, \flag{V}[d], \flag{W}[1], \dots, \flag{W}[d]\}.
\]
We will not carefully distinguish between these vertices and the corresponding subspaces of $\mathbb{R}^d$; the resulting mild abuses of notation should not cause any confusion. 

The edges of our graphs will encode intersection properties of pairs of layers in our flags:
\begin{itemize}
\item We include the edge $(\flag{U}[i], \flag{V}[j])$ in $G$ if and only if $$\perm{U}{V}(i) = j.$$  Recall from the proof of \cref{thm:2flags} that $\perm{U}{V}(i)$ is the smallest $j \in [d]$ such that $(\flag{U}[i]\setminus \flag{U}[i-1])\cap \flag{V}[j]\neq\varnothing$. Likewise, ${\perm[-1]{U}{V}}(j)$ is the smallest $i \in [d]$ such that $(\flag{V}[j]\setminus \flag{V}[j-1])\cap \flag{U}[i]\neq\varnothing$. 
The edges of the form $(\flag{V}[j], \flag{W}[k])$ and $(\flag{W}[k], \flag{U}[i])$ are determined analogously.
\item We include the edge $(\flag{U}[i], \flag{V}[j])$ in $\widetilde G$ if and only if $${(\flag{U}[i]\setminus \flag{U}[i-1])\cap (\flag{V}[j]\setminus \flag{V}[j-1])\neq\varnothing},$$ i.e., if and only if the pair $\{\flag{U}[i], \flag{V}[j]\}$ is compatible. Again, edges $(\flag{V}[j], \flag{W}[k])$ and $(\flag{W}[k], \flag{U}[i])$ are determined analogously.
\end{itemize}
\begin{figure}
\centering
\includegraphics[scale=1.1]{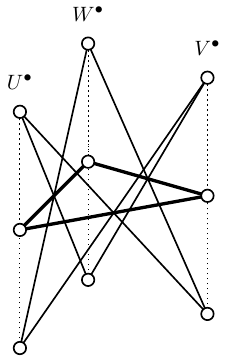} \qquad \qquad \qquad
\includegraphics[scale=1.1]{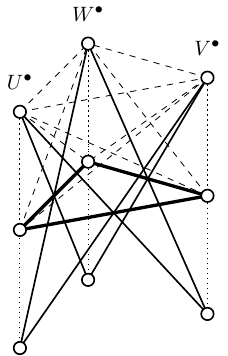}
\caption{The graphs $G$ (left) and $\widetilde G$ (right) for a triple of generic flags in $\mathbb{R}^3$.  The graph $G$ consists of a triangle (in bold) and a $6$-cycle; these cycles cross.  The resulting edges present in $\widetilde G$ but not in $G$ are drawn dashed.}
\label{fig:generic}
\end{figure}
Both $G,\widetilde G$ are tripartite graphs, with partite sets 
given by $\flag U, \flag V, \flag W$.  The graph $G$ is moreover an edge-disjoint union of three perfect matchings: one between $\flag U, \flag V$, one between $\flag V, \flag W$, and one between $\flag W, \flag U$. It follows that $G$ is a disjoint union of cycles, each with an equal number of vertices in $\flag U,\flag V,\flag W$. In particular, each cycle has length a multiple of~3.

It will be useful for us to depict the graphs $G, \widetilde G$ pictorially.  We place the vertices of $\flag U, \flag V, \flag W$ in three vertical columns, one for each flag.
In each column we arrange vertices by dimension, with
$U^i$ being the $i$-th vertex from the bottom.
The three columns 
form parallel edges of a right triangular prism.  Now each edge of $G$ or $\widetilde G$ can be depicted as a line segment  on one of the rectangular faces of 
this prism.  See \cref{fig:generic} for an example.

We say that two edges \emph{cross} if they intersect in this geometric representation.
Formally, edges $(U^i, V^j)$ and $(U^{i'}, V^{j'})$ cross if $i > i'$ and $j < j'$ or vice versa.
We say that two cycles of $G$ \emph{cross} if an edge of one crosses an edge of the other.  If two cycles are non-crossing, then one cycle must lie completely below the other; thus any collection of pairwise non-crossing cycles is totally ordered by height.

It is clear that the edge set of $\widetilde G$ contains the edge set of $G$. 
Less obviously,
the edge set of $G$ 
completely determines the  edge set of $\widetilde G$, according to the following rule.

\begin{lemma}\label{lem:tildeG}
Let $i,j \in [d]$. Let $i', j' \in [d]$ be the unique indices such that $(\flag{U}[i], \flag{V}[j'])$ and $(\flag{U}[i'], \flag{V}[j])$ are edges of $G$.
Then $(\flag{U}[i],\flag{V}[j])$ is an edge in $\widetilde G$ if and only if $i\ge i'$ and $j\ge j'$.
\end{lemma}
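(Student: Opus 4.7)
The plan is to rewrite the $\widetilde G$ edge condition as a pair of strict dimension inequalities, and then recognize each of those inequalities through the proof of \cref{thm:2flags}. Concretely, I would split the proof into two steps.

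First, I would translate compatibility into dimensions. A vector in $(\flag{U}[i]\setminus \flag{U}[i-1])\cap(\flag{V}[j]\setminus \flag{V}[j-1])$ is exactly a vector of $\flag{U}[i]\cap\flag{V}[j]$ that lies in neither $\flag{U}[i-1]\cap\flag{V}[j]$ nor $\flag{U}[i]\cap\flag{V}[j-1]$. Such a vector exists precisely when both of these two subspaces are \emph{proper} in $\flag{U}[i]\cap\flag{V}[j]$, since a vector space is never the union of two proper subspaces (given $a$ in one and $b$ in the other but not in the first, the sum $a+b$ lies in neither). Since each of those two subspaces has codimension $0$ or $1$ in $\flag{U}[i]\cap\flag{V}[j]$, being proper is the same as having strictly smaller dimension. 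Hence $(\flag{U}[i],\flag{V}[j])$ is an edge of $\widetilde G$ if and only if both
\[
\dim(\flag{U}[i]\cap \flag{V}[j])>\dim(\flag{U}[i-1]\cap \flag{V}[j])\quad\text{and}\quad\dim(\flag{U}[i]\cap \flag{V}[j])>\dim(\flag{U}[i]\cap \flag{V}[j-1]).
\]

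Second, I would match each inequality with $\perm{U}{V}$. The proof of \cref{thm:2flags} already records the equivalence between $\perm{U}{V}(i)\leq j$ and the first of the displayed strict inequalities, so the first one amounts to $j\geq \perm{U}{V}(i)=j'$. Applying the same equivalence with the roles of $U$ and $V$ swapped (using $\perm[-1]{U}{V}$) shows that the second inequality amounts to $i\geq \perm[-1]{U}{V}(j)=i'$. Combining the two yields the stated characterization on the edge $(\flag{U}[i],\flag{V}[j])$, and the same reasoning handles the edges between $\flag{V},\flag{W}$ and between $\flag{W},\flag{U}$.

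I do not expect any substantial obstacle: the only real content is the elementary fact that a vector space is not a union of two proper subspaces, which powers the first-step reduction. Everything else is a direct rereading of the characterization of $\perm{U}{V}$ from \cref{thm:2flags}.
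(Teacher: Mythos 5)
Your proof is correct and follows essentially the same route as the paper's: both arguments reduce the $\widetilde G$ edge condition to the statement that $\flag{U}[i]\cap\flag{V}[j]$ is not contained in either of the subspaces $\flag{U}[i-1]$ or $\flag{V}[j-1]$, invoke the fact that a vector space cannot be the union of two proper subspaces, and then read off the two conditions from the characterization of $\perm{U}{V}$ established in the proof of \cref{thm:2flags}. The only cosmetic difference is that you phrase the two conditions as dimension inequalities rather than directly as set non-emptiness; the mathematical content is identical.
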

Pictorially, this lemma says that $\widetilde G$ can be obtained from $G$ by adding an edge between the two top vertices in each pair of crossing edges; see \cref{fig:crossing-edges}.  Of course, the lemma also holds with $(U,\z V)$ replaced by $(V,\z W)$ or $(W,\z U)$.

\begin{figure}[h]
\centering
\includegraphics{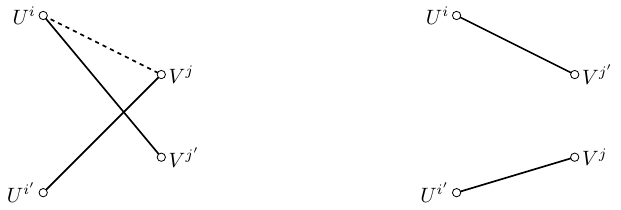}
\caption{An illustration of \cref{lem:tildeG}.  The left part of the diagram shows a pair of crossing edges in $G$, with the corresponding additional edge (drawn dotted) in $\widetilde G$.  The right part of the diagram shows a pair of non-crossing edges in $G$, for which there is no additional edge in $\widetilde G$.}
\label{fig:crossing-edges}
\end{figure}

\begin{proof}[Proof of \cref{lem:tildeG}]
Recall that $j'$ is minimal in $[d]$ such that ${(\flag{U}[i]\setminus \flag{U}[i-1])\cap \flag{V}[j'] \neq \varnothing}$.  It follows that if $j < j'$, then
\[
(\flag{U}[i]\setminus \flag{U}[i-1])\cap (\flag{V}[j]\setminus \flag{V}[j-1]) \subseteq (\flag{U}[i]\setminus \flag{U}[i-1])\cap \flag{V}[j] = \varnothing
\]
and thus $(\flag{U}[i],\flag{V}[j])$ is not an edge in $\widetilde G$.  Likewise, $(\flag{U}[i],\flag{V}[j])$ is not an edge in $\widetilde G$ if $i<i'$.
Suppose now that $i \geq i'$ and $j \geq j'$.  The characterizations of $j',i'$ ensure that $$(\flag{U}[i]\setminus \flag{U}[i-1]) \cap \flag{V}[j] \quad \text{and} \quad \flag{U}[i]\cap (\flag{V}[j]\setminus \flag{V}[j-1])$$ are both non-empty, i.e., $\flag{U}[i] \cap \flag{V}[j]$ is not contained in either of $\flag{U}[i-1]$ or $\flag{V}[j-1]$.  
Hence, $\flag{U}[i] \cap \flag{V}[j]$ is not contained in the union $\flag{U}[i-1] \cup \flag{V}[j-1]$, i.e., ${(\flag{U}[i]\setminus \flag{U}[i-1])\cap (\flag{V}[j]\setminus \flag{V}[j-1])}$ is nonempty.  Thus $(\flag{U}[i], \flag{V}[j])$ is an edge in $\widetilde G$, as desired.
\end{proof}
We remark that for a triple of layers to be compatible it is necessary---but in general not sufficient---that the corresponding vertices form a triangle in $\widetilde G$.  

\subsection{Even cycles and crossing odd cycles}

Our goal is to show that $\Gamma$ can be covered by at most $5d/3$ compatible sets.  Our first step towards this goal is showing that certain subsets of $\Gamma$ can be covered efficiently by compatible pairs.

\begin{lemma}\label{lem:graphmatching}
Let $S \subseteq \Gamma$ be a set of vertices.
\begin{enumerate}[(i)]
\item If $S$ forms an even cycle in $G$, then $S$ can be covered by $\abs{S}/2$ compatible pairs.
\item If $S$ forms an odd cycle in $G$, then $S$ can be covered by $(\abs{S} + 1)/2$ compatible pairs.
\item If $S$ is the union of two crossing odd cycles in $G$, then $S$ can be covered by $\abs{S}/2$ compatible pairs.
\end{enumerate}
\end{lemma}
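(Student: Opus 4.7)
The plan is to observe that every edge of $G$ (and hence of $\widetilde G$) corresponds by definition to a compatible pair, so it suffices to cover $S$ by the prescribed number of edges of $\widetilde G$ (possibly with overlap in part (ii)).

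Parts (i) and (ii) will be direct consequences of matchings in cycles. When $S$ is an even cycle of $G$, its alternating edges form a perfect matching, yielding $|S|/2$ compatible pairs that cover $S$. When $S$ is an odd cycle, I would pick any vertex $v$; the cycle minus $v$ is an even path whose alternating edges form a perfect matching with $(|S| - 1)/2$ edges of $G$, and adjoining any cycle edge incident to $v$ produces a cover of $S$ by $(|S| + 1)/2$ compatible pairs (with $v$ contained in exactly one pair and one of its neighbors contained in two).

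For part (iii), the key step is to construct a ``bridge'' edge in $\widetilde G$ that connects the two cycles. Since $C_1$ and $C_2$ cross by hypothesis, there exist crossing edges $e_1 \in C_1$ and $e_2 \in C_2$; these necessarily lie in the same bipartite piece of $G$, so after possibly swapping the two cycles and relabeling the flags we may assume $e_1 = (\flag{U}[i], \flag{V}[j'])$ and $e_2 = (\flag{U}[i'], \flag{V}[j])$ with $i > i'$ and $j > j'$. Then \cref{lem:tildeG} supplies the edge $(\flag{U}[i], \flag{V}[j])$ of $\widetilde G$, whose endpoints $\flag{U}[i]$ and $\flag{V}[j]$ lie in $C_1$ and $C_2$ respectively. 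Removing these two endpoints leaves an even-length path in each cycle, and each such path admits a perfect matching consisting of $G$-edges. Combining these two matchings with the bridge edge yields $(|C_1| - 1)/2 + (|C_2| - 1)/2 + 1 = |S|/2$ compatible pairs that cover $S$.

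The only substantive step is the application of \cref{lem:tildeG} in part (iii) to produce the bridge edge and the verification that its endpoints lie in distinct cycles; the remainder amounts to elementary matchings in cycles and paths, essentially unchanged from their textbook versions.
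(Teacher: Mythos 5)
Your proof is correct and follows the same route as the paper's: reduce to the edge-cover number in $\widetilde G$, handle parts (i) and (ii) by matchings in a cycle of $G$, and for part (iii) invoke \cref{lem:tildeG} to produce a single bridge edge between the two crossing cycles and then complete the cover with matchings on the two even paths that remain. The only difference is that you spell out the routine bookkeeping (verifying that a pair of crossing edges sits in one bipartite piece and that \cref{lem:tildeG} indeed delivers the bridge edge with one endpoint in each cycle), which the paper leaves implicit.
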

\begin{proof}
The definition of $\widetilde G$ ensures that the number of compatible pairs needed to cover $S$ is precisely the edge cover number of the subgraph of $\widetilde G$ induced by $S$.  The first two parts of the lemma follow quickly from the fact that $G$ is a subgraph of $\widetilde G$.

We now treat the third part of the lemma.  Since the two cycles are crossing, \cref{lem:tildeG} implies that there is an edge in $\widetilde G$ connecting the two cycles; it follows that $S$ can be covered by $\abs{S}/2$ edges.  See \cref{fig:crossing-cycles}.
\end{proof}

\begin{figure}
\centering
\includegraphics[scale=0.9]{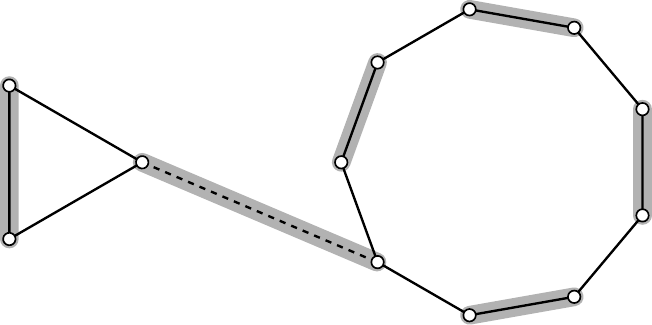}
\caption{The edge cover (shaded) used in the third part of \cref{lem:graphmatching}.}
\label{fig:crossing-cycles}
\end{figure}

\subsection{Grouping cycles by efficiency}
Recall that $G$ is a disjoint union of cycles with lengths divisible by $3$.   We divide these cycles into three groups $A,B,C$ according to how efficiently we can cover them with compatible sets, as follows.

First, put all of the even cycles in $A$, thereby removing them from consideration.
Do the same with each triangle in $G$ that forms a compatible triple. 
Then, greedily, whenever we discover among the remaining odd cycles two that cross we place both of them in $A$ and remove them from consideration.

\begin{figure}[b]
\centering
\includegraphics[scale=1.2]{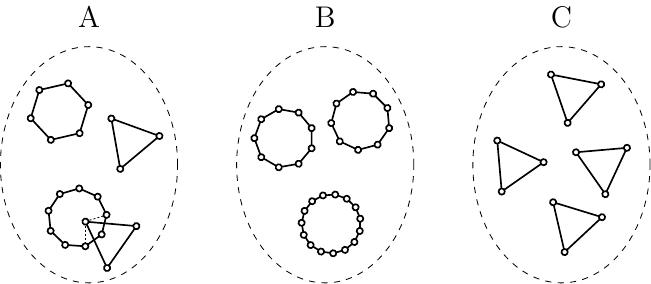}
\caption{A schematic of the types of cycles appearing in the groups $A,\z B,\z C$.  In $A$, the isolated triangle represents a compatible triple, and the two odd cycles at the bottom are crossing (with the dotted edges representing edges present in $\widetilde G$ but not $G$).}
\label{fig:buckets}
\end{figure}

At the end, we are left with some number of non-crossing odd cycles.
Let $C$ consist of the triangles among them, and let $B$ consist of the other cycles.
Thus each cycle in $B$ has length at least $9$, and no triangle in $C$ forms a compatible triple.  Note that the cycles in $B$ and $C$ are ordered by height.  See \cref{fig:buckets}.

Let $|A|$,~$|B|$,~$|C|$ denote the total number of \emph{vertices} in the cycles in $A$,~$B$,~$C$, respectively.
Clearly
\begin{equation}\label{eq:abc1}
|A|+|B|+|C|= \abs{\Gamma} = 3d.
\end{equation}
Moreover, \cref{lem:graphmatching} ensures that the following hold:
\begin{itemize}
\item Each even cycle $S$ in $A$ can be covered by 
$\abs{S}/2$ compatible sets. 
\item Each triangle $S$ in $A$ which forms a compatible triple can be covered by $1 < \abs{S}/2$ compatible sets. 
\item Each union $S$ of two crossing odd cycles 
in $A$ can be covered by
$\abs{S}/2$ compatible sets.
\item Each cycle $S$ in $B$ can be covered by 
$(\abs{S} + 1)/2\le 5/9 \cdot \abs{S}$ compatible sets.
\item Each triangle $S$ in $C$ can be covered by $2=2/3 \cdot |S|$ compatible sets.
\end{itemize}
These observations together imply that the vertex set $\Gamma$ can be covered by at most
\begin{equation}\label{eq:abc2}
\frac12|A| + \frac59|B|+\frac23|C|
\end{equation}
compatible sets.  It remains to control the sizes of $A,B,C$.

\subsection{A monovariant that controls triangles}
Recall that our goal is to cover $\Gamma$ by at most $5d/3$ compatible sets.  Since $|\Gamma|=3d$, our goal is to cover $\Gamma$ at an average ``rate'' of at most $5/9$ of a compatible set per vertex.  We see from \eqref{eq:abc2} that vertices in $A$ can be covered more efficiently than this critical rate, vertices in $B$ can be covered at precisely the critical rate, and
our procedure for covering vertices in $C$ is less efficient than the critical rate.  One should therefore think of $A$,~$B$,~$C$ as being good, neutral, and bad, respectively, and we would like to know that $A$ is large compared to $C$.  More precisely, in order to 
bound \eqref{eq:abc2} from above by $5d/3$, we need
$$|A| \geq 2|C|.$$
Since $|A|+|B|+|C|=3d$ by \eqref{eq:abc1}, this inequality is equivalent to
\begin{equation}\label{eq:abc3}
\frac12|A| + \frac13|B| \ge d,
\end{equation}
which will be more convenient for us.
Establishing \eqref{eq:abc3} is the most difficult part of our proof. 

In order to prove \eqref{eq:abc3}, we will study the evolution of the quantity $$\dim(i, j, k)\defeq \dim(\flag{U}[i]\cap \flag{V}[j]\cap \flag{W}[k])$$ as we travel from $(0,0,0)$ to $(d,d,d)$ by incrementing one coordinate at a time according to a particular procedure.  
One should think of this procedure as a lattice walk from $(0,0,0)$ to $(d,d,d)$ using the steps $(1,0,0),(0,1,0),(0,0,1)$.  We will say that a step from the position 
$(i-1, j, k)$ (respectively, $(i,j-1,k)$ or $(i,j,k-1)$) to the position $(i, j, k)$ is a \emph{hop} on the vertex $\flag{U}[i]$ (respectively, $\flag{V}[j]$ or $\flag{W}[k]$) at $(i, j, k)$. Over the course of our procedure, we hop exactly once on each vertex of $\Gamma$. We define the \emph{cost} of the vertex $\flag{U}[i]$ to be $$\cost(\flag{U}[i]):=\dim(i, j, k) - \dim(i-1, j, k),$$ where $j,k$ are such that we hopped on $\flag{U}[i]$ at $(i, j, k)$.  The cost of a vertex $\flag{V}[j]$ or $\flag{W}[k]$ is defined analogously. The cost of each vertex is either $0$ or $1$. 
We will say that the cost of a set of vertices is the sum of the costs of its elements. Observe that
\[
\cost(\Gamma)=\dim(d,d,d)-\dim(0,0,0)=d-0=d.
\]
It is crucial to our strategy that we are free to choose the lattice path from $(0,0,0)$ to $(d,d,d)$. A suitable choice will allow us to control the costs of the cycles in each of $A,B,C$.
\begin{lemma}\label{lem:bound_cost}
Let $A,B,C$ be as above.  Then there exists a lattice path from $(0,0,0)$ to $(d,d,d)$
for which the following holds:
\begin{enumerate}
\item The cost of any cycle $S$ in $A$ is at most $\abs{S}/2$.
\item The cost of any cycle $S$ in $B$ is at most $\abs{S}/3$.
\item The cost of any triangle in $C$ is $0$.
\end{enumerate}
\end{lemma}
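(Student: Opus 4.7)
The plan is to construct the lattice path by processing the cycles of $B \cup C$ in height order (a total order guaranteed by their non-crossing property) as blocks of consecutive hops, and scheduling the vertices of the $A$-cycles into the gaps between and within these blocks. The three cost bounds will then follow from a local dimension identity attached to each cycle's bounding box in the lattice $\{0,\ldots,d\}^3$.

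The key local calculation concerns a triangle $(\flag{U}[i],\flag{V}[j],\flag{W}[k])$ in $G$. The permutation equations $\perm{U}{V}(i)=j$, $\perm{V}{W}(j)=k$, $\perm{W}{U}(k)=i$ immediately yield the three pairwise intersection identities $\flag{U}[i]\cap\flag{V}[j-1]=\flag{U}[i-1]\cap\flag{V}[j-1]$, $\flag{V}[j]\cap\flag{W}[k-1]=\flag{V}[j-1]\cap\flag{W}[k-1]$, and $\flag{W}[k]\cap\flag{U}[i-1]=\flag{W}[k-1]\cap\flag{U}[i-1]$ (using the minimality definition of $\sigma$). A direct check from these three identities shows that $\dim(i', j', k') = \dim(i-1, j-1, k-1)$ at every corner $(i', j', k') \in \{i-1, i\}\times\{j-1, j\}\times\{k-1, k\}$ except possibly the top corner $(i, j, k)$. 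If the triangle lies in $C$, then non-compatibility forces $\flag{U}[i]\cap\flag{V}[j]\cap\flag{W}[k]$ to be covered by the three proper subspaces obtained by dropping one layer; since over $\R$ a subspace cannot be covered by three proper subspaces, the intersection coincides with one of them, and combined with the seven-corner identity this forces $\dim(i, j, k) = \dim(i-1, j-1, k-1)$ as well. Hence $\dim(\cdot,\cdot,\cdot)$ is constant on the whole unit cube, so the three consecutive hops performed when the path enters at $(i-1,j-1,k-1)$ and exits at $(i,j,k)$ each have cost $0$, giving item~(3).

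The $B$-cycle bound will be obtained by decomposing a cycle of length $3\ell$ into $\ell$ sub-triples of consecutive vertices, each of which carries two of the three $G$-edges. The two-edge analogue of the above identity provides a four-corner cube identity; combined with the elementary bound $\dim(a,b,c)-\dim(a,b,c-1)\le 1$, this yields cost at most $1$ per sub-triple, and hence cost at most $\ell = |S|/3$. For $A$-cycles the weaker bound $|S|/2$ is established by analogous but cruder cube-style arguments: pairing consecutive vertices for even cycles, using the compatible vector for compatible triangles, and employing the bridge edge from \cref{lem:tildeG} to link two crossing odd cycles into a single manageable block. The main obstacle I anticipate is the interaction between $A$-cycles and $B\cup C$-cycles: an $A$-cycle may cross a cycle in $B\cup C$, so neither can be processed as a wholly self-contained block. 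Overcoming this requires a careful interleaved scheduling of hops that uses the quantitative slack between the loose $|S|/2$ bound on $A$-cycles and the tight $|S|/3$ bound on $B$-cycles to absorb any cost spillover and preserve all three inequalities simultaneously.
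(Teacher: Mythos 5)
Your treatment of item~(3) is essentially sound: you rederive the content of \cref{lem:real-magic} correctly, with the only cosmetic difference being that you reach the conclusion $\dim(i,j,k)=\dim(i-1,j-1,k-1)$ via a ``subspace not covered by three proper subspaces'' detour combined with your seven-corner identity, whereas the paper argues the set-theoretic equality directly from two applications of \cref{lem:sleight-of-hand}. The calculation underlying your item~(2) is also correct in spirit --- each $\flag{U}$- or $\flag{V}$-vertex of a $B$-cycle has cost zero provided the $\flag{W}$-neighbor in $G$ is not yet hopped on --- though your ``process each sub-triple consecutively'' framing is unrealizable as stated, since the layer indices along a $B$-cycle are not monotone and $A$-cycle vertices are interleaved in height; the paper instead hops column-wise (all $\flag{U}$-layers, then all $\flag{V}$-layers, then all $\flag{W}$-layers) within each block between successive $C$-triangles, which achieves the same $\abs{S}/3$ bound without any ordering issues.

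The genuine gap is item~(1). You plan ``cruder cube-style arguments'' for $A$-cycles, then flag what you call the main obstacle --- that an $A$-cycle can cross a $B\cup C$-cycle, forcing a delicate interleaving to ``absorb cost spillover'' --- and leave that unsolved. This worry dissolves entirely once one notices that the $\abs{S}/2$ bound for $A$-cycles holds for \emph{every} lattice path, with no scheduling cleverness required. Indeed, \cref{lem:sleight-of-hand} shows that for any edge $(x,y)$ of $G$, whichever endpoint is hopped on first has cost $0$; hence for any lattice path the set of cost-$1$ vertices is an independent set in $G$, and an independent set in a cycle of length $n$ has size at most $\lfloor n/2\rfloor\le n/2$. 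This path-independence is the key structural insight your proposal is missing: it means one is free to design the lattice path solely to make items~(2) and (3) work, and item~(1) comes along automatically. Without it, the interleaving problem you anticipate really is intractable by the means you describe, so as written the proof has a hole exactly where you expected one.
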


For the lattice path produced by \cref{lem:bound_cost}, we have
$$d=\cost(\Gamma)=\cost(A)+\cost(B)+\cost(C) \leq \frac12 |A|+\frac13 |B|,$$
which is precisely \eqref{eq:abc3}.  We will prove \cref{lem:bound_cost} in the next subsection.

\subsection{Proof of the key lemma}
Before proving \cref{lem:bound_cost}, we isolate two technical linear-algebraic facts.

\begin{lemma}\label{lem:sleight-of-hand}
Let $1\le i,j, k, j'\le d$ be such that $\flag{U}[i]$ is adjacent to $\flag{V}[j']$ in $G$.
If $j <j'$, then $\flag{U}[i]\cap \flag{V}[j]\cap \flag{W}[k] = \flag{U}[i-1]\cap \flag{V}[j]\cap \flag{W}[k]$. Analogous statements hold for all permutations of $U, V, W$.
\end{lemma}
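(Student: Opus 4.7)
The plan is to unpack the meaning of the edge $(\flag{U}[i], \flag{V}[j'])$ in $G$ and conclude essentially by definition.

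Recall from the proof of \cref{thm:2flags} that the edge $(\flag{U}[i], \flag{V}[j'])$ in $G$ encodes the identity $\perm{U}{V}(i) = j'$, which in turn means that $j'$ is the \emph{smallest} index in $[d]$ such that $(\flag{U}[i] \setminus \flag{U}[i-1]) \cap \flag{V}[j'] \neq \varnothing$. So my first step is to translate the hypothesis $j < j'$ into the vanishing
\[
(\flag{U}[i] \setminus \flag{U}[i-1]) \cap \flag{V}[j] = \varnothing,
\]
which is immediate from minimality of $j'$.

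The second step is to observe that this vanishing is equivalent to the inclusion $\flag{U}[i] \cap \flag{V}[j] \subseteq \flag{U}[i-1]$. Combined with the trivial inclusion $\flag{U}[i-1] \cap \flag{V}[j] \subseteq \flag{U}[i] \cap \flag{V}[j]$ (coming from $\flag{U}[i-1] \subseteq \flag{U}[i]$), this gives the equality of two-fold intersections
\[
\flag{U}[i] \cap \flag{V}[j] = \flag{U}[i-1] \cap \flag{V}[j].
\]
Intersecting both sides with $\flag{W}[k]$ then produces the desired identity, and the analogous statements for the other permutations of $U, V, W$ follow by identical arguments (with the roles of the flags permuted).

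There isn't really a main obstacle here: the lemma is a direct unpacking of the definition of the graph $G$, and the $\flag{W}[k]$ plays no role beyond being intersected with both sides at the very end. The only subtlety is making sure that the minimality characterization of $\perm{U}{V}(i)$ is invoked correctly, rather than (say) the minimality characterization of $\perm[-1]{U}{V}$.
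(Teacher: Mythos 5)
Your proof is correct and takes essentially the same approach as the paper: invoke the minimality of $\perm{U}{V}(i)=j'$ to get $(\flag{U}[i]\setminus\flag{U}[i-1])\cap\flag{V}[j]=\varnothing$, translate this to $\flag{U}[i]\cap\flag{V}[j]=\flag{U}[i-1]\cap\flag{V}[j]$, and intersect with $\flag{W}[k]$.
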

\begin{proof}
The hypothesis
$\sigma_{\flag U, \flag V}(i) = j' > j$ forces
$(\flag{U}[i]\setminus \flag{U}[i-1])\cap \flag{V}[j]=\varnothing$, i.e., $\flag{U}[i]\cap \flag{V}[j]=\flag{U}[i-1]\cap \flag{V}[j]$.
\end{proof}

\begin{lemma}\label{lem:real-magic}
If $\flag{U}[i],\,\flag{V}[j],\,\flag{W}[k]$ form a triangle in $G$ but do not constitute a compatible triple, then
\[
\flag{U}[i]\cap \flag{V}[j]\cap \flag{W}[k] = \flag{U}[i-1]\cap \flag{V}[j-1]\cap \flag{W}[k-1].
\]
\end{lemma}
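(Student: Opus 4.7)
The plan is to establish the non-trivial inclusion $\subseteq$; the reverse one is immediate since layers are nested. I would start by abbreviating $P \defeq \flag{U}[i] \cap \flag{V}[j] \cap \flag{W}[k]$ and introducing the three subspaces
\[
L_U \defeq \flag{U}[i - 1] \cap \flag{V}[j] \cap \flag{W}[k], \quad L_V \defeq \flag{U}[i] \cap \flag{V}[j - 1] \cap \flag{W}[k], \quad L_W \defeq \flag{U}[i] \cap \flag{V}[j] \cap \flag{W}[k - 1],
\]
each contained in $P$. The hypothesis that the triangle $\flag{U}[i], \flag{V}[j], \flag{W}[k]$ is \emph{not} compatible is precisely the statement that no vector of $P$ is simultaneously new for all three layers, so it translates into the set-theoretic identity $P = L_U \cup L_V \cup L_W$.

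Next I would invoke the classical fact that a vector space over an infinite field is never the union of finitely many proper subspaces. This forces $P$ to coincide with at least one of $L_U, L_V, L_W$. By the cyclic symmetry of the hypothesis in $(U, i), (V, j), (W, k)$, there is no loss of generality in assuming $P = L_U$, i.e., $P \subseteq \flag{U}[i - 1]$.

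It then remains to upgrade this single inclusion to $P \subseteq \flag{U}[i - 1] \cap \flag{V}[j - 1] \cap \flag{W}[k - 1]$. Because $(\flag{U}[i], \flag{V}[j])$ is an edge of $G$, \cref{thm:2flags} gives $\perm{V}{U}(j) = i$; the minimality inherent in the definition of this permutation value then yields $(\flag{V}[j] \setminus \flag{V}[j - 1]) \cap \flag{U}[i - 1] = \varnothing$. Since $P \subseteq \flag{U}[i - 1]$, no vector of $P$ can be new for $\flag{V}[j]$, so $P \subseteq \flag{V}[j - 1]$. The edge $(\flag{W}[k], \flag{U}[i])$ of $G$ gives $\perm{W}{U}(k) = i$, and the identical argument with $W$ in place of $V$ produces $P \subseteq \flag{W}[k - 1]$. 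The only real obstacle is conceptual: one must recognize that non-compatibility of the triple naturally produces a covering of a subspace by three proper subspaces. Once that reframing is made, the infinite-field assumption does all the heavy lifting and the edges of $G$ finish the job.
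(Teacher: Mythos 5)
Your proof is correct over $\R$ (indeed over any infinite field), and it shares its second half with the paper's argument, but it takes a genuinely different route to the crucial first step. Both proofs ultimately rest on the same two facts: that the edge $(\flag{V}[j],\flag{U}[i])$ of $G$ gives $\perm{V}{U}(j)=i$, forcing $(\flag{V}[j]\setminus\flag{V}[j-1])\cap\flag{U}[i-1]=\varnothing$, and the analogous statement for the edge $(\flag{W}[k],\flag{U}[i])$. The paper packages these as \cref{lem:sleight-of-hand}, applies it twice to deduce
\[
L_U \ \defeq\ \flag{U}[i-1]\cap\flag{V}[j]\cap\flag{W}[k] \ \subseteq\ \flag{U}[i-1]\cap\flag{V}[j-1]\cap\flag{W}[k-1]
\]
for \emph{any} triangle in $G$, and then invokes the $\symmetric{3}$-symmetry of the triangle hypothesis to obtain the analogous inclusions for $L_V$ and $L_W$; non-compatibility is only used at the very end, to see that these three inclusions force the stated equality. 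You instead lead with non-compatibility, translate it into the covering $P = L_U \cup L_V \cup L_W$ of a subspace by three subspaces, and use the classical fact that a vector space over an infinite field cannot be written as a finite union of proper subspaces to force $P$ to already coincide with one $L_\bullet$. This is a clean conceptual shortcut and saves you from chaining all three inclusions. The price is a dependence on the field being infinite that the paper deliberately avoids: as remarked in \cref{sec:concluding}, the upper-bound argument---including this lemma---works over \emph{every} field, giving $\mu_\K \le \mu$ unconditionally, whereas your covering step would fail over small finite fields (e.g.\ $\F_2^2$ is a union of three lines). So both proofs are valid in the paper's stated setting, but the paper's version is what preserves field-agnosticity.
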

\begin{proof}
To prove the lemma, it is enough to show that the identity
\[
(\flag{U}[i]\cap \flag{V}[j]\cap \flag{W}[k]) \setminus (\flag{U}[i-1]\cap \flag{V}[j-1]\cap \flag{W}[k-1]) = 
(\flag{U}[i] \setminus \flag{U}[i-1])\cap (\flag{V}[j] \setminus \flag{V}[j-1]) \cap (\flag{W}[k] \setminus \flag{W}[k-1])
\]
holds whenever
$\flag{U}[i],\,\flag{V}[j],\,\flag{W}[k]$ form a triangle in $G$.
It is easy to see that the left-hand side contains the right-hand side. For the reverse inclusion, it suffices (by symmetry) to show that 
\[
(\flag{U}[i]\cap \flag{V}[j]\cap \flag{W}[k]) \setminus (\flag{U}[i-1]\cap \flag{V}[j-1]\cap \flag{W}[k-1]) \subseteq 
\flag{U}[i] \setminus \flag{U}[i-1],
\]
or, equivalently, that
\[
\flag{U}[i-1]\cap \flag{V}[j]\cap \flag{W}[k] \subseteq \flag{U}[i-1]\cap \flag{V}[j-1]\cap \flag{W}[k-1] \,.
\]
Since $\flag{V}[j]$ and $\flag{W}[k]$ are both adjacent to $\flag{U}[i]$ in $G$, we can apply \cref{lem:sleight-of-hand} twice (at the positions ${(i-1,j,k)}$ and ${(i-1,j-1,k)}$) to obtain
\[
\flag{U}[i-1]\cap \flag{V}[j]\cap \flag{W}[k]
= 
\flag{U}[i-1]\cap \flag{V}[j-1]\cap \flag{W}[k]
=
\flag{U}[i-1]\cap \flag{V}[j-1]\cap \flag{W}[k-1],
\]
as desired.
\end{proof}

\begin{proof}[Proof of Lemma \ref{lem:bound_cost}]
\Cref{lem:sleight-of-hand} tells us that if $(x,y)$ is an edge in $G$ and our lattice path hops on the vertex $x$ before the vertex $y$, then $\cost(x)=0$.  It follows that, for any choice of the lattice path, the set of vertices with cost $1$ forms an independent set in $G$.  This observation immediately implies the first part of \cref{lem:bound_cost}.

For the second and third parts of \cref{lem:bound_cost}, we must specify a choice of lattice path.  
Recall that the triangles in $C$ are ordered by height and 
that
each cycle in $B$ lies entirely between two such triangles
(or possibly below the bottom-most triangle or above the top-most triangle).  Let $M$ denote the number of triangles in $C$.  Then there are sequences of indices $i_1<\cdots<i_M$, $j_1<\cdots<j_M$, and $k_1<\cdots<k_M$
such that the triangles in $C$ are given by
$$
(\flag{U}[i_m], \flag{V}[j_m], \flag{W}[k_m])
$$
for $1 \leq m \leq M$.

\begin{figure}
\centering
\includegraphics{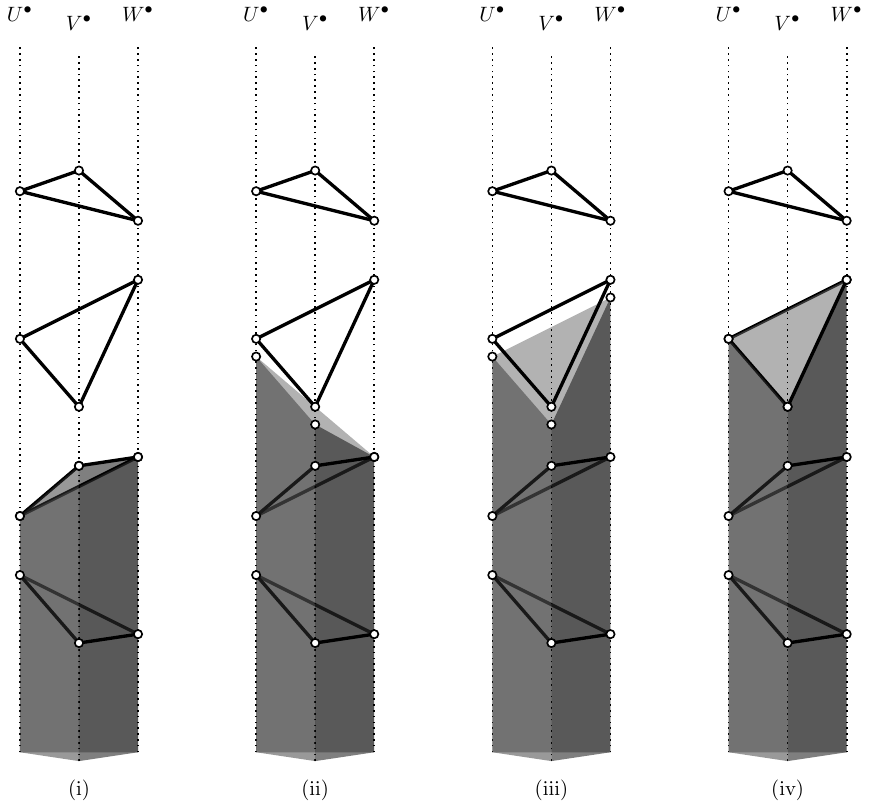}
\caption{Schematic ``snapshots'' (from left to right) of four stages of the hopping procedure from the proof of \cref{lem:bound_cost}.  The triangles in $C$ are drawn in bold.  In each snapshot the already-hopped-on vertices are those in the shaded region. (i) We have just hopped on the vertices of the second triangle from the bottom.  (ii) We have hopped on the vertices of $\flag{U},\flag{V}$ strictly between the second and third triangles.  (iii) We have hopped on the vertices of $\flag{W}$ strictly between the second and third triangles.  (iv) We have hopped on the vertices of the third triangle.}
\label{fig:hops}
\end{figure}

Now consider the following lattice path. We begin by hopping in the order
$$\flag{U}[1], \flag{U}[2], \ldots, \flag{U}[i_1-1], \quad \flag{V}[1], \flag{V}[2], \ldots, \flag{V}[j_1-1], \quad \flag{W}[1], \flag{W}[2], \ldots, \flag{W}[k_1-1]$$
and then hop over the triangle $\flag{U}[i_1], \flag{V}[j_1], \flag{W}[k_1]$. We continue with
$$\flag{U}[i_1+1], \ldots, \flag{U}[i_2-1],\quad \flag{V}[j_1+1], \ldots, \flag{V}[j_2-1],\quad \flag{W}[k_1+1], \ldots, \flag{W}[k_2-1],$$
and so on, until we hop over the last triangle 
$\flag{U}[i_M], \flag{V}[j_M], \flag{W}[k_M]$ and conclude with
$$\flag{U}[i_M+1], \ldots, \flag{U}[d],\quad \flag{V}[j_M+1], \ldots, \flag{V}[d],\quad \flag{W}[k_M+1], \ldots, \flag{W}[d].$$
In other words, for each triangle in $C$, we increment $i$ until we reach the triangle, then we increment $j$ until we reach the triangle, then we increment $k$ until we reach the triangle, and finally we hop on the vertices of the triangle.

Since the cycles in $B \cup C$ are ordered by height, each cycle $S$ in $B$ is entirely contained in one of the chunks
$$\flag{U}[i_m+1], \ldots, \flag{U}[i_{m+1}-1],\quad \flag{V}[j_m+1], \ldots, \flag{V}[j_{m+1}-1],\quad \flag{W}[k_m+1], \ldots, \flag{W}[k_{m+1}-1]$$
for some $0 \leq m \leq M$ (with the conventions ${i_0=j_0=k_0:=0}$ and $i_{M+1}=j_{M+1}=k_{M+1}:=d+1$).  As in the first paragraph of the proof, it follows from \cref{lem:sleight-of-hand} that each vertex of $S$ in $\flag{U},\flag{V}$ has cost $0$, because such a vertex is the endpoint of an edge of $G$ whose other endpoint is a not-yet-hopped-on vertex in $\flag{W}$.  Since $S$ has $|S|/3$ vertices in $\flag{W}$, we conclude that $\cost(S) \leq |S|/3$.  This establishes the second part of \cref{lem:bound_cost}.

\cref{lem:real-magic} tells us precisely that the cost of each triangle $(\flag{U}[i_m], \flag{V}[j_m], \flag{W}[k_m])$ is zero.  This proves the third part of \cref{lem:bound_cost}.
\end{proof}

\subsection{Conclusion}
We are now ready to assemble the pieces. We emphasize that multiple striking numerical coincidences are necessary for the proof to work.
\begin{proof}[Proof of \cref{thm:3flags}]
Let $\flag U$, $\flag V$, $\flag W$ be flags in $\mathbb{R}^d$.  Construct the graphs $G,\widetilde G$ and the sets $A,B,C$ as above.  \cref{lem:bound_cost} implies the crucial inequality \eqref{eq:abc3}.  Combining this with \eqref{eq:abc1} and \eqref{eq:abc2}, we can upper-bound $\mu(\flag U, \flag V, \flag W)$ by
$$\frac12|A|+\frac59|B|+\frac23|C|
=
\frac23\big(|A|+|B|+|C|\big) - \frac13\left(\frac12|A|+\frac13|B|\right)
\le \frac233d - \frac13d = \frac53d,$$
as desired.
\end{proof}

\subsection{Equality cases}
\cref{thm:3flags} tells us that $\mu(\flag U, \flag V, \flag W)\leq 5d/3$ for all triples of flags $\flag U, \flag V, \flag W$ in $\mathbb{R}^d$, and it is natural to ask when equality is achieved.
Clearly, we must have $d$ a multiple of 3.
A close examination of the arguments in this section (including the consideration of more intricate lattice paths than the one used in the proof of \cref{lem:bound_cost}) reveals that the equality $\mu(\flag U, \flag V, \flag W)=5d/3$ 
cannot hold unless the associated graph $G$ satisfies the following constraints:
\begin{itemize}
\item $G$ consists of exactly $d/3$ triangles, none of them forming a compatible triple, and some number of even cycles;
\item the triangles in $G$ are non-crossing and hence can be ordered by height;
\item for any two consecutive edges in an even cycle, there is some triangle which both edges cross.\footnote{In other words, as we travel around an even cycle, the edges alternately cross triangles from below and from above.}
\end{itemize}
A separate clever argument then shows that these conditions actually \emph{suffice} to guarantee equality.
We leave the (non-obvious) details of these deductions to the adventurous reader.

\begin{figure}[h]
    \centering
    \includegraphics[scale=1.3]{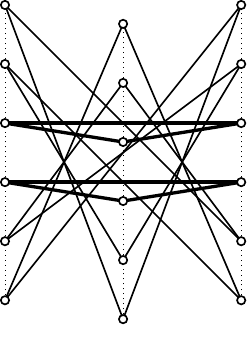}
    \qquad 
    \includegraphics[scale=0.8]{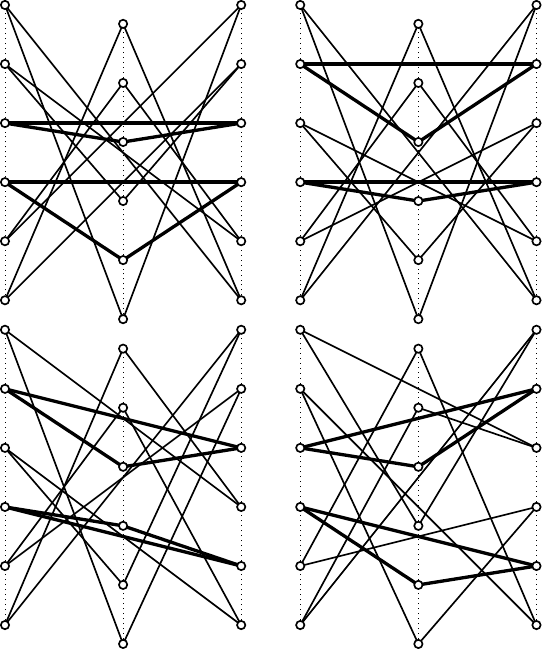}
    \caption{The graph $G$ for several equality cases.
    The large graph on the left is a particularly symmetric example which contains a $12$-cycle.  The four smaller graphs on the right are randomly-chosen examples.}
    \label{fig:equality}
\end{figure}

The previous paragraph gave an elaborate combinatorial characterization of the equality cases for \cref{thm:3flags} in terms of the auxiliary graph $G$.
The proof of \cref{prop:sum_flags}, by contrast, supplies us with a simple explicit construction of some equality cases: Take direct sums of the configuration of three generic flags in $\R^3$.  The graph $G$ for this ``building block'' is depicted in \cref{fig:generic}.
One might wonder if all the equality cases can be obtained via simple modifications of the construction of the previous paragraph, in which case our combinatorial characterization could be simplified.

This hope is dashed when one examines the hundreds of different equality cases for $m = 3$, $d = 6$ 
(see \cref{fig:equality} for a sampling).\footnote{See \cref{sec:EL} for an indication of how to generate all possible graphs $G$.}
In some instances the graph $G$ contains $12$-cycles. This diversity of equality cases suggests that a full geometric understanding of them
would be a difficult task, and it underscores that something like the delicate combinatorics of \cref{sec:3-flags} is necessary for the proof of \cref{thm:3flags}.

\section{Concluding remarks}\label{sec:concluding}

We conclude with some remarks and open problems.

\subsection{Grids of dimensions of intersections}\label{sec:EL}

Given an $m$-tuple of flags in $\mathbb{R}^d$, one can form a $d \times \cdots \times d$ grid recording the dimensions of all $m$-fold intersections of layers of flags (one layer from each flag).
The data in this grid is sufficient to determine whether a set of layers is compatible (i.e., whether there is a single vector that is new for all of the layers)
and, consequently, is enough to determine the value of $\mu$.\footnote{Here we are implicitly using the fact that no subspace of $\mathbb{R}^d$ can be covered by a finite union of proper subspaces; for the situation over finite fields, see the following subsection.}  This is more or less the perspective taken in the proofs of \cref{sec:2flags,sec:generic,sec:3-flags}.

It is thus natural to ask for a combinatorial characterization of the set of all possible such grids of dimensions of intersections for given values of $m,d$.  The Bruhat decomposition provides such a characterization in the case of pairs of flags ($m=2$).

Eriksson and Linusson \cite{EL,EL2} gave a simple necessary condition, valid for all $m$, for a grid of numbers to be the grid of dimensions of intersections of an $m$-tuple of flags.  An older result of Shapiro, Shapiro, and Vainshtein \cite{SSV} shows somewhat surprisingly, that this condition is also sufficient in the case $m=3$.  This characterization allowed us to obtain computational data for triples of flags with small $d$, which was quite useful for the investigations leading to the present paper.

The case $m>3$ is less well understood.  Motivated by examples of non-representable matroids, Billey and Vakil \cite{BV} have shown that Eriksson and Linusson's necessary condition is \emph{not} sufficient for 
$m \geq 4$.
See \cite{BV} for further discussion and references.

\subsection{Finite fields}
Our main problem can be posed over fields other than $\mathbb{R}$. More precisely, for a field $\mathbb{K}$ and positive integers $m, d$, let $\mu_{\K}(m, d)$ be the smallest natural number $n$ such that any $m$-tuple of flags in $\K^d$ can be generated by at most 
$n$ vectors.
As mentioned in the introduction, the proof presented in this paper shows that $\mu_{\K} = \mu_{\R} = \mu$ for all infinite fields $\K$.
In fact, the proof of our upper bound on $\mu$ works for all fields, so we always have $\mu_{\K} \leq \mu$.

As for lower bounds, it is not hard to see\footnote{by general model-theoretic considerations, or a direct counting argument} that for fixed $m$ we can find $m$ transverse flags in each of $\K^2$ and $\K^3$ as soon as $\K$ is large enough in cardinality.
Repeating the proof of~\cref{thm:main}, we find that $\mu_{\mathbb{K}}(m, d) = \mu(m, d)$ for all sufficiently large fields $\mathbb{K}$ (uniformly in $d$).  At the same time, one cannot expect equality to hold for all $\mathbb{K},m,d$: When $\mathbb{K}$ is finite,
choosing a nonzero vector in each 1-dimensional subspace of $\K^d$ gives a universal generating set, independent of $m$.
A careful analysis, based in part on~\cite[Table 6.1]{Ox}, reveals 
the following two propositions.
\begin{proposition}
Fix $m \in \mathbb{N}$ and a prime power $q\ge 2$. 
We have $\mu_{\F_q}(m, d) = \mu(m, d)$ for all $d \in \mathbb{N}$ if and only if $q\ge m-1$.
\end{proposition}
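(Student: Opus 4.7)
My plan is to treat the two directions separately: necessity follows from the $d = 2$ case, while sufficiency reduces to constructing transverse $m$-tuples of flags in two small base dimensions and invoking the inductive scheme of \cref{thm:main}.

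For necessity, suppose $q < m - 1$. Then $\F_q^2$ contains only $q + 1 < m$ distinct one-dimensional subspaces, so any $m$-tuple of flags in $\F_q^2$ uses at most $q + 1$ distinct flags and is generated by $q + 1$ vectors (one per distinct line). Hence $\mu_{\F_q}(m, 2) \leq q + 1 < m = \mu(m, 2)$, so the equality fails at $d = 2$.

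For sufficiency, suppose $q \geq m - 1$. The upper bound $\mu_{\F_q} \leq \mu$ has already been noted. For the matching lower bound, the key observation is that the lower-bound halves of \cref{prop:generic} and \cref{cor:superadditivity} are both field-independent, provided transverse $m$-tuples of flags exist in the relevant dimension. Once we have $\mu_{\F_q}(m, 2) \geq m$ and $\mu_{\F_q}(m, 3) \geq \lceil 3m/2 \rceil$, the remaining base case $\mu_{\F_q}(m, 4) \geq 2m = \mu(m, 4)$ follows from superadditivity, and the same step-by-$3$ induction as in the proof of \cref{thm:main} then yields $\mu_{\F_q}(m, d) \geq \mu(m, d)$ for all $d$. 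The case $d = 2$ is immediate: any $m$ of the $q + 1 \geq m$ distinct lines in $\F_q^2$ form a transverse $m$-tuple.

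The $d = 3$ case is the main content. I would construct a transverse $m$-tuple of flags $(L_i \subset P_i)$ in $\F_q^3$ in two steps. First, find $m$ distinct hyperplanes $P_1, \dots, P_m \subset \F_q^3$ such that $P_i \cap P_j \cap P_k = \{0\}$ for all distinct $i, j, k$; dually, this amounts to an $m$-arc in $\mathbb{P}^2(\F_q)$, which exists for $m \leq q + 1$ via a non-degenerate conic when $q$ is odd and via a hyperoval when $q$ is even. Second, on each projective line $P_i$ (which has $q + 1 \geq m$ points), choose $L_i$ avoiding the $m - 1$ intersection points $P_i \cap P_j$ with $j \neq i$; such a choice exists since $q + 2 - m \geq 1$. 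All transversality conditions are then immediate from the construction, and \cref{prop:generic} yields $\mu_{\F_q}(m, 3) \geq \lceil 3m/2 \rceil$. The main obstacle is handling characteristic $2$: the natural strategy of letting $P_i$ be the tangent to a conic at $L_i$ fails there, since all tangents to a conic in characteristic $2$ meet at the nucleus and therefore violate $P_i \cap P_j \cap P_k = \{0\}$; fixing the $P_i$ first via a hyperoval bypasses this issue uniformly.
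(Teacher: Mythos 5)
Your proposal is correct and, modulo filling in details the paper leaves to ``a careful analysis,'' it follows exactly the strategy the paper sketches just above the proposition: construct transverse $m$-tuples of flags in $\F_q^2$ and $\F_q^3$, observe that the lower-bound arguments in \cref{prop:generic} and \cref{cor:superadditivity} are field-independent, and then re-run the induction from the proof of \cref{thm:main}; the counting in $\F_q^2$ supplies the only-if direction. Your realization of the $d=3$ base case via arcs in the dual plane (a conic for odd $q$, a hyperoval for even $q$, so that $m$-arcs exist precisely when $m\leq q+1$) is the content behind the paper's citation of~\cite[Table 6.1]{Ox}, and your subsequent choice of $L_i\subset P_i$ avoiding the $m-1$ distinct intersection lines $P_i\cap P_j$ correctly delivers all pairwise and triple transversality conditions, again exactly when $q\geq m-1$.
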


\begin{proposition}
    Fix a field $\K$ and $m \in \mathbb{N}$. 
    Then $\mu_{\K}(m, d) \geq md/2$ for all $d\in\N$ sufficiently large.
    In particular, when $m$ is even, we have $\mu_{\K}(m, d) = \mu(m, d)$ for all sufficiently large $d$.
\end{proposition}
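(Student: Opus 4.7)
The plan is to reduce the proposition to an existence statement: for any field $\K$ and integer $m \geq 2$, for all $d$ sufficiently large, $\K^d$ admits an $m$-tuple of fully transverse complete flags. Once such a tuple exists, \cref{prop:generic} immediately yields $\mu_\K(m, d) \geq \lceil md/2 \rceil \geq md/2$: the proof of \cref{prop:generic} is field-independent, since the only step that uses the field is the selection of a nonzero vector from a one-dimensional intersection, which is always possible. The ``in particular'' clause for even $m$ then follows at once from the matching upper bound $\mu_\K(m, d) \leq \mu(m, d) = md/2$ supplied by \cref{cor:subadditivity} and $\mu_\K(2, d) = d$.

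For an infinite $\K$ the existence of fully transverse $m$-tuples is immediate: they form a nonempty Zariski-open subset of $\fl_d(\K)^m$ and hence have $\K$-rational points. For $\K = \F_q$ with $q \geq m$, the polynomial-interpolation construction behind \cref{prop:generic} applies directly: distinct $t_1, \dots, t_m \in \F_q$ and $\flag[i]{U}[a] := (x-t_i)^{d-a}\F_q[x]_{<a}$ yield fully transverse flags in $\F_q[x]_{<d} \cong \F_q^d$ for every $d$. The substantive case is $\F_q$ with $q < m$; here I plan to lift from an extension. Take $n$ minimal with $q^n \geq m$, choose distinct $t_1, \dots, t_m \in \F_{q^n}$, and apply the polynomial construction over $\F_{q^n}$ to obtain $m$ fully transverse $\F_{q^n}$-flags in $\F_{q^n}^{d'}$. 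Via restriction of scalars $\F_{q^n}^{d'} \cong \F_q^{nd'}$, these become partial $\F_q$-flags with layers at $\F_q$-dimensions $0, n, 2n, \dots, nd'$, which I refine to complete $\F_q$-flags by inserting intermediate layers chosen generically. To cover dimensions $d$ not divisible by $n$, I invoke \cref{cor:superadditivity} and a Chicken McNugget argument: producing two tight dimensions $D_1, D_2$ with $\gcd(D_1, D_2) = 1$ via the above recipe, every sufficiently large $d$ can be written as $aD_1 + bD_2$ with $a, b \geq 0$, and superadditivity then gives $\mu_\K(m, d) \geq a \mu_\K(m, D_1) + b \mu_\K(m, D_2) \geq md/2$.

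The main obstacle is the refinement step over small finite fields: one must show that intermediate $\F_q$-layers can be inserted so that the resulting complete $\F_q$-flags are still fully transverse. The admissible refinements form an open subvariety of a product of Grassmannians over $\F_q$ whose dimension grows with $d'$; a dimension count, formalized via Lang--Weil, shows that the ``bad'' locus where transversality fails at some layer-index tuple has strictly smaller dimension than the ambient variety, so admissible refinements exist in $\F_q$-points once $d'$ is large enough. With this ingredient in hand, the rest of the plan goes through.
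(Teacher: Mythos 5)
Your overall plan---produce witnesses achieving $\mu(\flag[1]{U},\dots,\flag[m]{U}) \geq md/2$ for each large $d$ and close the even-$m$ case with the known upper bound---is the right shape, and the reduction for infinite $\K$ and for $\F_q$ with $q\ge m$ (osculating flags at distinct points) is fine. The problem is the case $\F_q$ with $q<m$, which is exactly the case that has any content, and your argument there has a genuine gap. Lang--Weil controls point counts in the regime where $q\to\infty$ with the variety essentially fixed; here $q$ is fixed and the ambient variety of refinements grows with $d'$. In particular, the ``bad locus'' where full transversality fails is a union of roughly $d'^{\,m}$ hypersurfaces (one for each $m$-tuple of layer indices), and a dimension count alone does not preclude this union from swallowing every $\F_q$-rational point of the refinement variety once $d'$ is large relative to $q$. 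You cannot conclude that an admissible refinement with $\F_q$-coordinates exists. More fundamentally, the intermediate claim you are trying to establish---that transverse $m$-tuples of \emph{complete} flags exist over every $\F_q$ once $d$ is large---is not known and may well be false for $q < m-1$; note that the paper explicitly leaves open whether $\mu_{\F_2}(5,d)=\mu(5,d)$ for large $d$, which signals that configurations of flags over small fields are not well understood.

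You are also aiming at a target that is stronger than you need. Full transversality is far more than the $md/2$ lower bound requires, and insisting on it is what forces you into the problematic counting. A more direct route is available: it suffices to exhibit $m$ complete flags in $\K^D$ for \emph{one} suitable large $D$ whose middle layers behave well, and then run the direct-sum argument of \cref{prop:sum_flags} (which is field-independent) to propagate. Concretely, for even $D$ large enough (in terms of $q$ and $m$), a partial spread provides $m$ pairwise trivially intersecting $(D/2)$-dimensional subspaces of $\F_q^D$; extend each to a complete flag arbitrarily, and the resulting $m$-tuple already forces $\mu \ge mD/2$, since a common generating set must contain $D/2$ distinct new vectors for each flag's bottom half and no vector can serve two flags. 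This uses only the pairwise intersection structure of a single dimension level---no transversality, no generic refinement, no Lang--Weil. To handle $d$ of the opposite parity one then needs one additional odd-dimensional building block achieving $\ge \lceil mD'/2\rceil$, which again can be arranged once $D'$ is large (the extra $\lceil m/2\rceil$ coming from a careful choice of the $\lceil D'/2\rceil$-dimensional layers sitting above a partial spread). Without some version of this more economical construction, the $q<m$ case of your proof does not close.
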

With these facts in mind, we pose the following open problem.
\begin{problem}
    Fix a field $\K$ and $m\in\mathbb N$.
    Do we always have $\mu_\K(m, d) = \mu(m, d)$ for all $d\in\N$ sufficiently large?
    The simplest open case here is $m = 5$ and $\K = \F_2$.
\end{problem}

\subsection{Matroid analogues}
Problems in linear algebra can often be extended to the realm of matroids.
The analogues for matroids of ``vector'', ``span'', ``subspace'', ``hyperplane'' and ``dimension'' are ``ground set element'', ``span'', ``flat'', ``hyperplane'' and ``rank'' (see, e.g., \cite{Ox}). Using this dictionary one can define ``(complete) flags of flats'' in a matroid.
The naive matroid formulation for the main problem studied in this paper would ask for the minimal number of ground set elements needed to simultaneously span each of the flats in each of $m$ complete flags of flats in a matroid of rank $d$.
Unfortunately, this approach fails to get off the ground because the analogue of \cref{thm:2flags} is false for matroids.
(This is because intersections of flats can behave quite badly, e.g., an intersection of two rank-$10$ flats in ambient rank $12$ can have rank $1$.)
Rather, the \emph{correct} formulation is based on a dualized version of \cref{thm:2flags}.
Given $m$ flags of flats in a matroid, we ask for the minimum number of hyperplanes needed to write each flat as an intersection of (some of these) hyperplanes.
Note that for vector spaces, this question is equivalent to the question that we have considered in this paper.
We ask if $\mu(m,d)$ remains an upper bound in the matroid setting.
\begin{question}
Let $m,d \in \mathbb{N}$.  Is it true that for every rank-$d$ matroid $M$ and every $m$-tuple of complete flags of flats in $M$, there is a set of $\mu(m,d)$ hyperplanes in $M$ that generates (via intersections) all of the flags?  If not, then how many hyperplanes (as a function of $m,d$) are needed in the worst case?
\end{question}
In this dualized setting, the analogue of \cref{thm:2flags} goes through and gives a satisfactory answer for $m = 2$.  A suitable version of our crucial \cref{lem:real-magic} also holds. Unfortunately, the analogue of \cref{lem:sleight-of-hand} is false for general matroids (essentially because the entire ground set can be the union of two proper flats). This problem disappears if we restrict our attention to a special class of matroids dubbed \emph{unbreakable matroids}\footnote{Note that the matroids induced by vector spaces (namely, projective spaces) are
unbreakable.} by Pfeil in his doctoral thesis~\cite{Pf}; see also \cite{OP}. This class
does obey an appropriate version of \cref{lem:sleight-of-hand}; it follows that our upper-bound arguments (in particular \cref{thm:3flags}) extend to this class of matroids.

In general the matroid version of our main problem remains open.  We remark, however, that if 
a counterexample to the matroid version of \cref{thm:3flags} exists, then the task of finding one seems computationally difficult.

\section*{Acknowledgements}
We have benefited from helpful discussions with many people, including Sara Billey, Matt Larson, Sam Mundy, Junyao Peng, and Stefan Steinerberger.  
The first author is supported by the National Science Foundation
under Grant No. DMS-1926686. The second author is supported in part by the NSF Graduate Research Fellowship Program under grant DGE–203965.

\end{document}